\newtheorem{thm}{Theorem}[section]
\newtheorem{prop}[thm]{Proposition}
\newtheorem{lem}[thm]{Lemma}
\newtheorem{oss}[thm]{Remark}
\newtheorem{cor}[thm]{Corollary}
\def\RR{{\mathbb{R}}}
\def\NN{{\mathbb{N}}}
\newcommand{\eps}{\varepsilon}
\definecolor{green(ryb)}{rgb}{0.173, 0.627, 0.173}
\newcommand{\norm}[1]{\left\lVert #1 \right\rVert}
\def\ps@pprintTitle{%
   \let\@oddhead\@empty
   \let\@evenhead\@empty
   \let\@oddfoot\@empty
   \let\@evenfoot\@oddfoot
}
\begin{document}

\begin{frontmatter}


\title{Exact controllability to eigensolutions for evolution equations of parabolic type via bilinear control\tnoteref{fund}}
\tnotetext[fund]{This paper was partly supported by the INdAM National Group for Mathematical Analysis, Probability and their Applications.}



\author{Fatiha Alabau Boussouira}
\address{Laboratoire Jacques-Louis Lions Sorbonne Universit\'{e}, Universit\'{e} de Lorraine, 75005, Paris, France

alabau@ljll.math.upmc.fr}
\author{Piermarco Cannarsa\fnref{pc}} 
\address{Dipartimento di Matematica, Universit\`{a} di Roma Tor Vergata, 00133, Roma, Italy

cannarsa@mat.uniroma2.it}
\author{Cristina Urbani\corref{cor1}\fnref{cu}}
\address{Dipartimento di Matematica, Universit\`{a} di Roma Tor Vergata, 00133, Roma, Italy 

urbani@mat.uniroma2.it}
\cortext[cor1]{Corresponding Author}
\fntext[pc]{This author acknowledges support from the MIUR Excellence Department Project awarded to the Department of Mathematics, University of Rome Tor Vergata, CUP E83C18000100006.}
\fntext[cu]{This author is grateful to University Italo Francese (Vinci Project 2018).}

\begin{abstract}
In a separable Hilbert space $X$, we study the controlled evolution equation
\begin{equation*}
u'(t)+Au(t)+p(t)Bu(t)=0,
\end{equation*}
where $A\geq-\sigma I$ ($\sigma\geq0$) is a self-adjoint linear operator, $B$ is a bounded linear operator on $X$, and $p\in L^2_{loc}(0,+\infty)$ is a bilinear control.

We give sufficient conditions in order for the above nonlinear control system to be locally controllable to the $j$th eigensolution for any $j\geq1$. We also derive semi-global controllability results in large time and discuss applications to parabolic equations in low space dimension. Our method is constructive and all the constants involved in the main results can be explicitly computed.
\end{abstract}
\begin{keyword}
bilinear control \sep evolution equations \sep exact controllability  \sep parabolic PDEs \sep control cost

\MSC[2010] 35Q93 \sep 93C25 \sep 93C10 \sep 93B05 \sep 35K90
\end{keyword}

\end{frontmatter}


\section{Introduction}
In a separable Hilbert space $X$ consider the nonlinear control system
\begin{equation}\label{u}
\left\{
\begin{array}{ll}
u'(t)+Au(t)+p(t)Bu(t)=0,& t>0\\\\
u(0)=u_0.
\end{array}\right.
\end{equation}
where $A:D(A)\subset X\to X$ is a linear self-adjoint operator on $X$ such that $A\geq-\sigma I$, with $\sigma\geq0$, $B$ belongs to $\mathcal{L}(X)$, the space of all bounded linear operators on $X$, and $p(t)$ is a scalar function representing a bilinear control. We suppose that the spectrum of $A$ consists of a sequence of real numbers $\{\lambda_k\}_{k\in\NN^*}$ which can be ordered, whithout loss of generality, as $-\sigma\leq\lambda_k\leq\lambda_{k+1}\to\infty$ as $k\to\infty$. We denote by $\{\varphi_k\}_{k\in\NN^*}$ the corresponding eigenfunctions, $A\varphi_k=\lambda_k\varphi_k,$ with $\norm{\varphi_k}=1$, $\forall\,k\in\NN^*$.

In the recent paper \cite{acu}, we studied the stabilizability of \eqref{u} to the $j$-th eigensolution of the free equation ($p\equiv0$), $\psi_j(t)=e^{-\lambda_j t}\varphi_j$, for every $j\in\NN^*$.
For this purpose, we introduced the notion of \emph{$j$-null controllability in time $T>0$} for the pair $\{A,B\}$:
denoting by $y(\cdot;v_0,p)$ the solution of the  linear system
\begin{equation*}
\left\{\begin{array}{ll}
y'(t)+Ay(t)+p(t)B\varphi_j=0,&t\in[0,T]\\\\
y(0)=y_0,
\end{array}\right.
\end{equation*}
we say that  $\{A,B\}$ is $j$-null controllable in time $T>0$ if  for any initial condition $y_0\in X$ there exists a control $p\in L^2(0,T)$  such that
\begin{equation*}
y(T;v_0,p)=0
\quad\mbox{and}\quad \norm{p}_{L^2(0,T)}\leq N_T\norm{y_0} ,
\end{equation*}
where  $N_T$ is a positive constant depending only on $T$.
Then, the \emph{control cost} is given by
\begin{equation*}
N(T)=\sup_{\norm{y_0}=1}\inf\left\{\norm{p}_{L^2(0,T)}\,:\, y(T;y_0,p)=0\right\}.\end{equation*}

In \cite[Theorem 3.7]{acu} we have shown that, if $\{A,B\}$ is $j$-null controllable, then \eqref{u} is locally superexponentially stabilizable to $\psi_j$: for all $u_0$ in some neighborhood of $\varphi_j$ there exists a control $p\in L^2_{loc}([0,+\infty))$ such that the corresponding solution $u$ of \eqref{u} satisfies 
\begin{equation}\label{superex}
\norm{u(t)-\psi_j(t)} \leq Me^{-e^{\omega t}},\qquad \forall\,t\geq0
\end{equation}
for suitable constants $\omega,M>0$ independent of $u_0$. Notice that such a result holds only under the condition of $j$-null controllability for the pair $\{A,B\}$. In particular, no assumptions are required on the behavior of the control cost.

Moreover, in \cite[Theorem 3.8]{acu} we gave sufficient conditions to ensure the $j$-null controllability of $\{A,B\}$: a gap condition for the eigenvalues of $A$ and a rank condition on $B$.

In this paper, we address the related, more delicate, issue of the exact controllability of \eqref{u} to the eigensolutions $\psi_j$ via bilinear controls. The main differences between the results of this paper and \cite[Theorem 3.7]{acu} can be summarized as follows:
\begin{itemize}
\item in addition to assuming the pair $\{A,B\}$ to be $j$-null controllable, we further require that the control cost $N(\cdot)$ satisfies $N(\tau)\leq e^{\nu/\tau}$ for any $0<\tau\leq T_0$, with $\nu,T_0>0$,
\item under the above stronger assumptions, not only we prove local exact controllability in any time, but also global exact controllablity in large time for a wide set of initial data.
\end{itemize}

The following result ensures local exact controllability for problem \eqref{u} assuming a precise behavior of the control cost for small time. In the last section of this paper, we show that such a behavior of the control cost is typical of parabolic problems in one space dimension.
\begin{thm}\label{teo1}
Let $A:D(A)\subset X\to X$ be a densely defined linear operator such that
\begin{equation}\label{ipA}
\begin{array}{ll}
(a) & A \mbox{ is self-adjoint},\\
(b) &\exists\,\sigma\geq0\,:\,\langle Ax,x\rangle \geq -\sigma\norm{x} ^2,\,\, \forall\, x\in D(A),\\
(c) &\exists\,\lambda>-\sigma \mbox{ such that }(\lambda I+A)^{-1}:X\to X \mbox{ is compact},
\end{array}
\end{equation}
and let $B:X\to X$ be a bounded linear operator. Assume that $\{A,B\}$ is $j$-null controllable in any time $T>0$ for some $j\in\NN^*$ and suppose that
\begin{equation}\label{bound-control-cost}
N(\tau)\leq e^{\nu/\tau},\quad\forall\,0<\tau\leq T_0,
\end{equation}
for some constants $\nu,T_0>0$.

Then, for any $T>0$, there exists a constant $R_{T}>0$ such that, for any $u_0\in B_{R_{T}}(\varphi_j)$, there exists a control $p\in L^2(0,T)$ such that the solution $u$ of \eqref{u} satisfies $u(T)=\psi_j(T)$.
Moreover, the following estimate holds
\begin{equation}\label{intro-estim-p}
\norm{p}_{L^2(0,T)}\leq \frac{e^{-\pi^2\Gamma_0/T}}{e^{2\pi^2\Gamma_0/(3T)}-1},
\end{equation}
where $\Gamma_0$ and $R_T$ can be computed as follows
\begin{equation}\label{Gamma_0}
\Gamma_0:=2\nu+\max\left\{\ln(D),0\right\},
\end{equation}
\begin{equation}\label{RT}
R_T:=e^{-6\Gamma_0/T_1},
\end{equation}
with
\begin{equation}\label{T_11}
T_1:=\min\left\{\frac{6}{\pi^2}T,1,T_0\right\},
\end{equation}
\begin{equation}\label{D}
D:=2\norm{B}e^{2\sigma+(3\norm{B})/2+1/2}\max\left\{1,\norm{B}\right\}.
\end{equation}
\end{thm}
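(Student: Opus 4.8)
The plan is to reduce the exact controllability problem to a superexponentially fast iterative steering scheme and then to exploit the quantitative control cost bound \eqref{bound-control-cost} to guarantee that the scheme terminates within the prescribed time $T$. First I would pass to the error variable $w=u-\psi_j$, whose evolution reads $w'+Aw=-p\,Bw-p\,e^{-\lambda_j t}B\varphi_j$; the target $u(T)=\psi_j(T)$ becomes $w(T)=0$. The linearization around $\psi_j$ is precisely the $j$-null controllable system of the Introduction, with source $-p\,e^{-\lambda_j t}B\varphi_j$, the extra factor $e^{-\lambda_j t}$ being a harmless bounded rescaling on a finite interval (absorbed by passing, if convenient, to the moving frame $e^{\lambda_j t}u$). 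Well-posedness of the nonlinear equation for $p\in L^2$, together with continuous dependence, follows from the Duhamel formula and Gronwall's inequality since $B\in\mathcal L(X)$, and underlies the whole construction.

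Next I would split $[0,T]$ into countably many subintervals $[\tau_k,\tau_{k+1}]$ with lengths $\delta_k=\tau_{k+1}-\tau_k$ shrinking to $0$ and $\sum_k\delta_k\le T$; a choice of the form $\delta_k\propto T_1/k^2$ is natural, so that $\sum_k\delta_k$ is controlled by the Basel constant $\pi^2/6$, which explains the factor $6/\pi^2$ in \eqref{T_11} and the powers of $\pi^2$ in \eqref{intro-estim-p}. On each subinterval, starting from the current error $\varepsilon_k:=\norm{w(\tau_k)}$, I would invoke the $j$-null controllability of $\{A,B\}$ applied to the linearized equation to produce a control $p_k$ steering the \emph{linear} error to $0$ at time $\tau_{k+1}$, with cost $\norm{p_k}_{L^2(\tau_k,\tau_{k+1})}\le N(\delta_k)\,\varepsilon_k\le e^{\nu/\delta_k}\varepsilon_k$ by \eqref{bound-control-cost}.

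The crucial step is the nonlinear estimate. Writing $r=w-y$ for the gap between the true trajectory and its linearization $y$, one has $r(\tau_k)=0$ and $r'+Ar=-p_k\,Br-p_k\,By$; a Duhamel representation combined with the dissipativity bound $\norm{e^{-At}}\le e^{\sigma t}$, which follows from \eqref{ipA}(b), and Gronwall's inequality yield a \emph{quadratic} contraction $\varepsilon_{k+1}\le C\,e^{\nu/\delta_k}\varepsilon_k^2$, with $C$ depending on $\norm{B}$, $\sigma$ and $T$ exactly as encoded in $D$ of \eqref{D}. Iterating, I would show that if $\varepsilon_0\le R_T$, with $R_T$ and $\Gamma_0$ as in \eqref{RT}--\eqref{Gamma_0}, then the $\varepsilon_k$ decay doubly exponentially, fast enough to overcome the growth $e^{\nu/\delta_k}$ of the control cost. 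Passing to the limit then gives $w(T)=0$, i.e. $u(T)=\psi_j(T)$, while summing the geometric-type series $\sum_k\norm{p_k}_{L^2}$ produces the closed-form bound \eqref{intro-estim-p}.

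The main obstacle is precisely this balance: since $e^{\nu/\delta_k}$ blows up as the subintervals shrink, the quadratic gain $\varepsilon_k^2$ must be strong enough to dominate it, and this is what simultaneously forces the doubly exponential decay of $\varepsilon_k$ and the smallness threshold $R_T$ on the initial datum. Converting this qualitative balance into the explicit constants $\Gamma_0$, $R_T$, $T_1$ and $D$ requires careful bookkeeping of every constant produced along the iteration; this is the technically delicate, though essentially mechanical, part of the argument.
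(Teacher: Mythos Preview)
Your proposal is correct and follows essentially the same approach as the paper: reduce to the case $\lambda_j=0$ via the moving frame $z=e^{\lambda_j t}u$, split the interval into subintervals of length $T_1/k^2$ (so that the Basel sum yields $\pi^2 T_1/6\le T$), on each subinterval use $j$-null controllability to kill the linearized error and then the quadratic remainder estimate (the paper's Proposition~\ref{prop39}) to obtain $\varepsilon_{k+1}\le K(T_k)\varepsilon_k^2$ with $K(\tau)\le e^{\Gamma_0/\tau}$, and finally iterate to get doubly exponential decay of $\varepsilon_k$ and a summable geometric series for $\|p\|_{L^2}$. The only presentational difference is that the paper performs the moving-frame reduction at the outset rather than carrying the factor $e^{-\lambda_j t}$ through the iteration, which makes the bookkeeping of constants cleaner.
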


The main idea of the proof consists of applying the stability estimates of \cite{acu} on a suitable sequence of time intervals of decreasing length $T_j$, such that $\sum_{j=1}^\infty T_j<\infty$. Such a sequence, which can be constructed only thanks to \eqref{bound-control-cost}, has to be carefully chosen in order to fit the error estimates that we take from \cite{acu}. We point out that our method is fully constructive, being based on an algorithm that allows to compute all relevant constants. In particular, we make no use of inverse mapping theorems.

In \cite{acu}, we gave sufficient conditions for $j$-null controllability. However, the hypotheses of \cite[Theorem 3.8]{acu} do not guarantee the validity of condition \eqref{bound-control-cost} for the control cost. In the result that follows, we provide sufficient conditions for $N(T)$ to satisfy \eqref{bound-control-cost}. It would be interesting to understand if \eqref{bound-control-cost} is also necessary  for the local exact controllability of \eqref{u}.

\begin{thm}\label{Thm-suff-cond}
Let $A:D(A)\subset X\to X$ be such that \eqref{ipA} holds and suppose that there exists a constant $\alpha>0$ for which the eigenvalues of $A$ fulfill the gap condition
\begin{equation}\label{gap}
\sqrt{\lambda_{k+1}-\lambda_1}-\sqrt{\lambda_k-\lambda_1}\geq \alpha,\quad\forall\, k\in \NN^*.
\end{equation}
Let $B: X\to X$ be a bounded linear operator such that there exist $b,q>0$ for which
\begin{equation}\label{ipB}
\begin{array}{l}
\langle B\varphi_j,\varphi_j\rangle\neq0\quad\mbox{and}\quad\left|\lambda_k-\lambda_1\right|^q|\langle B\varphi_j,\varphi_k\rangle|\geq b,\quad\forall\,k\neq j.
\end{array}
\end{equation}
Then, the pair $\{A,B\}$ is $j$-null controllable in any time $T>0$, and the control cost $N(T)$ satisfies \eqref{bound-control-cost} with
\begin{equation}\label{T_0}
T_0:=\min\left\{1,1/\alpha^2\right\},
\end{equation}
and $\nu=\Gamma_j$, where
\begin{eqnarray}\label{Gamma_jFatiha}
\lefteqn{2\Gamma_j(M,b,q,\alpha)}
\\
\nonumber
&=&M+ \frac{M^2}4 + (2q+5) e + \max\left\{ \ln\left(\dfrac{3M}{|\langle B\varphi_j,\varphi_j\rangle|^2}\right), \ln\left(\dfrac{3MC_q}{b^2}\right), \ln\left(\dfrac{3M C_{q,\alpha} }{b^2}\right),0\right\}
\end{eqnarray}
and 
\begin{equation}\label{M}
M:=C^2\left(1+\frac{1}{\alpha^2}\right)^2+2|\lambda_1|,
\end{equation}
\begin{equation}\label{Cq-Cqa}
C_q=2\left(\frac{2q}{e}\right)^{2q},\quad C_{q,\alpha}=\frac{2\Gamma(2q+1)}{\alpha\sqrt{\lambda_2-\lambda_1}}.
\end{equation}
Here $\Gamma(\cdot)$ is the Gamma function and $C$ is a positive constant independent  of $T$ and $\alpha$.
\end{thm}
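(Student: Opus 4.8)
The plan is to recast $j$-null controllability as a moment problem and to solve it by means of a biorthogonal family whose $L^2$-norms can be bounded explicitly in terms of the gap constant $\alpha$ and the time $T$. First I would expand the state in the eigenbasis, $y(t)=\sum_{k\geq1}y_k(t)\varphi_k$, and set $b_k:=\langle B\varphi_j,\varphi_k\rangle$ and $\mu_k:=\lambda_k-\lambda_1\geq0$. Duhamel's formula gives $y_k(T)=e^{-\lambda_k T}y_{0,k}-b_k\int_0^T e^{-\lambda_k(T-s)}p(s)\,ds$, so, after the substitution $\tilde p(s):=p(T-s)$, the steering condition $y(T)=0$ becomes the moment problem
\begin{equation*}
\int_0^T e^{-\mu_k s}\,\tilde p(s)\,ds=c_k,\qquad c_k:=\frac{e^{-\lambda_k T}y_{0,k}}{b_k},\quad k\geq1.
\end{equation*}
Hypotheses \eqref{ipB} guarantee $b_k\neq0$ for every $k$, so the $c_k$ are well defined, with $|c_k|\leq e^{|\lambda_1|T}e^{-\mu_k T}\mu_k^{q}|y_{0,k}|/b$ for $k\neq j$, while the diagonal term obeys $|c_j|\leq e^{|\lambda_1|T}|y_{0,j}|/|\langle B\varphi_j,\varphi_j\rangle|$; this split is what produces the three competing arguments of the $\max$ in \eqref{Gamma_jFatiha}.

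The heart of the argument is the construction, under the square-root gap condition \eqref{gap}, of a family $\{\sigma_k\}\subset L^2(0,T)$ biorthogonal to $\{e^{-\mu_k s}\}$, that is $\int_0^T e^{-\mu_m s}\sigma_k(s)\,ds=\delta_{km}$, together with a quantitative bound of the form
\begin{equation*}
\norm{\sigma_k}_{L^2(0,T)}\leq e^{C/T}\,e^{C\left(1+1/\alpha^2\right)\sqrt{\mu_k}},
\end{equation*}
with $C$ a universal constant independent of $T$ and $\alpha$. Such estimates are classical for the one-dimensional parabolic situation (Fattorini--Russell theory), and I would derive them from an explicit Weierstrass-product construction so that every constant is computable; this is where the gap condition enters decisively and is, I expect, the main obstacle, since both the small-time factor $e^{C/T}$ and the sub-exponential growth $e^{C(1+1/\alpha^2)\sqrt{\mu_k}}$ in $k$ must be tracked sharply. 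Once the family is available, $\tilde p:=\sum_k c_k\sigma_k$ solves the moment problem, and by Cauchy--Schwarz (factoring out $\norm{y_0}$ via $\sum_k|y_{0,k}|^2=\norm{y_0}^2$) one gets
\begin{equation*}
N(T)\leq\left(\sum_{k\geq1}\frac{e^{-2\lambda_k T}}{|b_k|^2}\,\norm{\sigma_k}_{L^2(0,T)}^2\right)^{1/2}.
\end{equation*}
Note that the factor $e^{-\mu_k T}$ beats the sub-exponential growth $e^{C(1+1/\alpha^2)\sqrt{\mu_k}}$, so the series converges for every $T>0$; this already yields $j$-null controllability in arbitrary time.

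It then remains to estimate this series when $0<T\leq T_0=\min\{1,1/\alpha^2\}$. Inserting the biorthogonal bound and the lower bound $|b_k|\geq b\,\mu_k^{-q}$ ($k\neq j$), the generic term is of order $\mu_k^{2q}e^{-2\mu_k T}e^{2C(1+1/\alpha^2)\sqrt{\mu_k}}$; since $\sqrt{\mu_{k+1}}-\sqrt{\mu_k}\geq\alpha$, the points $\sqrt{\mu_k}$ form an $\alpha$-separated set, and the sum can be controlled by comparison with $\int_0^\infty x^{2q}e^{-2xT}\,dx$ and with $\sup_{x\geq0}x^{2q}e^{-2xT}$. These produce, respectively, the factor $\Gamma(2q+1)/T^{2q+1}$ (giving $C_{q,\alpha}$ after division by the gap $\alpha\sqrt{\lambda_2-\lambda_1}$) and the factor $(q/(eT))^{2q}$ (giving $C_q$). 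Optimizing the exponent $-2\mu_k T+2C(1+1/\alpha^2)\sqrt{\mu_k}$ by completing the square in $\sqrt{\mu_k}$ yields a contribution of order $C^2(1+1/\alpha^2)^2/T$, which together with the shift $2|\lambda_1|$ from $e^{-2\lambda_1 T}$ is exactly the source of $M$ in \eqref{M} and of the quadratic term $M^2/4$ in \eqref{Gamma_jFatiha}. Finally, collecting the remaining polynomial factors $T^{-a}$ and taking logarithms, I would use the elementary inequality $\ln(1/T)\leq 1/(eT)$ to convert each $T^{-a}$ into $e^{a/(eT)}$ (the origin of the $(2q+5)e$ term), and exploit $T\leq T_0\leq1$ so that constants are absorbed into $1/T$ and $e^{|\lambda_1|T}\leq e^{|\lambda_1|}$. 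A careful bookkeeping of all these contributions gives $\ln N(T)\leq\Gamma_j/T$ with $\Gamma_j$ as in \eqref{Gamma_jFatiha}, which is precisely \eqref{bound-control-cost}.
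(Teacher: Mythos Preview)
Your proposal is correct and follows essentially the same route as the paper: reduction to a moment problem in the eigenbasis, solution via a biorthogonal family to the exponentials with explicit $L^2$ bounds depending on the gap constant $\alpha$ and on $T$, then Cauchy--Schwarz and a careful estimate of the resulting series (splitting off $k=j$, using \eqref{ipB} for $k\neq j$, integral comparison plus the pointwise maximum of $\omega^{2q}e^{-\omega T}$, and completion of the square in $\sqrt{\mu_k}$). The only cosmetic difference is that you time-reverse the control and work with the decaying family $\{e^{-\mu_k s}\}$, whereas the paper shifts by $e^{\lambda_1 s}$ and works with the growing family $\{e^{\omega_k s}\}$, citing an external biorthogonal estimate rather than reproving it via Weierstrass products; the two formulations are equivalent and lead to the same final bookkeeping.
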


Observe that assumption \eqref{ipB} is stronger that \cite[hypothesis (16)]{acu}. Nevertheless, it is satisfied by all the examples of parabolic problems that we presented in \cite{acu}.

From Theorems \ref{teo1} and \ref{Thm-suff-cond} we deduce the following Corollary.
\begin{cor}
Let $A:D(A)\subset X\to X$ be such that \eqref{ipA} holds and suppose that there exists a constant $\alpha>0$ for which \eqref{gap} is satisfied. Let $B: X\to X$ be a bounded linear operator that verifies \eqref{ipB} for some $b,q>0$. Then, problem \eqref{u} is locally controllable to the $j$th eigensolution $\psi_j$ in any time $T>0$.
\end{cor}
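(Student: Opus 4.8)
The plan is to deduce the statement directly by chaining the two main theorems, since the hypotheses of the Corollary are exactly those of Theorem~\ref{Thm-suff-cond}. First I would invoke Theorem~\ref{Thm-suff-cond}: assumptions \eqref{ipA}, the gap condition \eqref{gap}, and the condition \eqref{ipB} on $B$ are precisely what that theorem requires, so it yields two conclusions at once --- that the pair $\{A,B\}$ is $j$-null controllable in every time $T>0$, and that its control cost $N(\cdot)$ obeys the quantitative bound \eqref{bound-control-cost} with the explicit values $T_0=\min\{1,1/\alpha^2\}$ and $\nu=\Gamma_j$ given by \eqref{Gamma_jFatiha}.

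Next I would feed these two facts into Theorem~\ref{teo1}. That theorem has exactly three standing hypotheses: that $A$ satisfies \eqref{ipA}, that $B$ is a bounded linear operator, and that $\{A,B\}$ is $j$-null controllable in every time together with the control-cost bound \eqref{bound-control-cost}. The first two are carried over verbatim from the hypotheses of the Corollary, while the remaining two have just been produced by Theorem~\ref{Thm-suff-cond}. Hence Theorem~\ref{teo1} applies, and for each $T>0$ it furnishes a radius $R_T>0$ such that every $u_0\in B_{R_T}(\varphi_j)$ can be steered by some control $p\in L^2(0,T)$ to $u(T)=\psi_j(T)$. This is exactly the notion of local controllability to the $j$th eigensolution $\psi_j$ in time $T$, and since $T>0$ was arbitrary the Corollary follows.

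Because both theorems are constructive, I would also note that the chain is fully quantitative: substituting $\nu=\Gamma_j$ into \eqref{Gamma_0} gives $\Gamma_0$, and then \eqref{T_11} and \eqref{RT} yield an explicit radius $R_T$ in terms of $\alpha$, $b$, $q$, $\norm{B}$, $\langle B\varphi_j,\varphi_j\rangle$, and $\sigma$. There is essentially no hard analytic step here, as all of the work is already packaged inside the two theorems; the only point to verify is that the two sets of hypotheses match and that the conclusion of Theorem~\ref{teo1} coincides with the definition of local controllability to $\psi_j$. The sole item of bookkeeping one must be careful about is that the values $T_0$ and $\nu$ delivered by Theorem~\ref{Thm-suff-cond} are exactly the constants entering the hypothesis \eqref{bound-control-cost} of Theorem~\ref{teo1}, so that no compatibility condition is lost in passing from one result to the other.
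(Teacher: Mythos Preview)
Your proposal is correct and matches the paper's approach exactly: the Corollary is stated immediately after Theorem~\ref{Thm-suff-cond} with the remark that it follows from Theorems~\ref{teo1} and~\ref{Thm-suff-cond}, and no separate proof is written out. Your chaining of the two theorems, together with the observation that the constants $T_0$ and $\nu=\Gamma_j$ produced by Theorem~\ref{Thm-suff-cond} plug directly into hypothesis~\eqref{bound-control-cost} of Theorem~\ref{teo1}, is precisely the intended argument.
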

Furthermore, from Theorem \ref{teo1} we deduce two semi-global controllability results in the case of an accretive operator $A$. In the first one, Theorem \ref{teoglobal} below, we prove that all initial states lying in a suitable strip can be steered in finite time to the first eigensolution $\psi_1$ (see Figure~\ref{fig1}). Moreover, we give a uniform estimate for the controllability time depending on the size of the projection of the initial datum $u_0$ on $\varphi_1^\perp$.
\begin{thm}\label{teoglobal}
Let $A:D(A)\subset X\to X$ be a densely defined linear operator such that \eqref{ipA} holds with $\sigma=0$ and let $B:X\to X$ be a bounded linear operator. Let \{A,B\} be a $1$-null controllable pair which satisfies \eqref{bound-control-cost}. Then, there exists a constant $r_1>0$ such that for any $R>0$ there exists $T_{R}>0$ such that for all $u_0\in X$ with
\begin{equation}\label{ipu0}
\left|\langle u_0,\varphi_1\rangle-1\right|< r_1,\qquad
\norm{u_0-\langle u_0,\varphi_1\rangle\varphi_1}\leq R,
\end{equation}
problem \eqref{u} is exactly controllable to the first eigensolution $\psi_1(t)=e^{-\lambda_1 t}\varphi_1$ in time $T_{R}$.
\end{thm}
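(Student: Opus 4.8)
The plan is to split the control strategy into two phases: a free-evolution phase that uses the dissipativity of $A$ to contract the component of $u_0$ orthogonal to $\varphi_1$, followed by a short phase in which I invoke the local exact controllability of Theorem~\ref{teo1}. Fix once and for all a time $\tau>0$ and let $R_\tau>0$ be the radius provided by Theorem~\ref{teo1} for the pair $\{A,B\}$ with $j=1$ (whose hypotheses coincide with those assumed here); I will take $r_1:=R_\tau/2$, so that $r_1$ is an absolute constant independent of $R$. Write $u_0=a\varphi_1+w_0$ with $a=\langle u_0,\varphi_1\rangle$ and $w_0\perp\varphi_1$, so that \eqref{ipu0} reads $|a-1|<r_1$ and $\norm{w_0}\le R$.

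First I would run \eqref{u} with $p\equiv0$ on $[0,t_0]$, so that $u(t_0)=a\,e^{-\lambda_1 t_0}\varphi_1+e^{-t_0A}w_0$. Since $\sigma=0$ and the first eigenvalue is simple with $\lambda_2>\lambda_1$, the spectral theorem gives $\norm{e^{-t_0A}w_0}\le e^{-\lambda_2 t_0}\norm{w_0}$. The key is to renormalize: set $z:=e^{\lambda_1 t_0}u(t_0)=a\varphi_1+e^{\lambda_1 t_0}e^{-t_0A}w_0$, so that, by orthogonality of the two summands,
\[
\norm{z-\varphi_1}^2=(a-1)^2+\norm{e^{\lambda_1 t_0}e^{-t_0A}w_0}^2\le r_1^2+e^{-2(\lambda_2-\lambda_1)t_0}R^2.
\]
Choosing $t_0$ so large that the second term is $<R_\tau^2-r_1^2=\tfrac34 R_\tau^2$, i.e. $t_0>\frac{1}{\lambda_2-\lambda_1}\ln\!\big(2R/(\sqrt3\,R_\tau)\big)$, I obtain $z\in B_{R_\tau}(\varphi_1)$. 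Note that this lower bound on $t_0$ depends only on $R$, which is what will yield the claimed uniform controllability time.

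The second phase exploits the homogeneity of the bilinear system. Applying Theorem~\ref{teo1} to the initial datum $z\in B_{R_\tau}(\varphi_1)$ produces a control $\hat p\in L^2(0,\tau)$ whose associated solution $\hat u$ (with $\hat u(0)=z$) satisfies $\hat u(\tau)=\psi_1(\tau)=e^{-\lambda_1\tau}\varphi_1$. Since the map $u_0\mapsto u$ is linear for a fixed control, and since $u(t_0)=e^{-\lambda_1 t_0}z=e^{-\lambda_1 t_0}\hat u(0)$, the solution of \eqref{u} on $[t_0,t_0+\tau]$ driven by $p(t)=\hat p(t-t_0)$ is exactly $u(t_0+t)=e^{-\lambda_1 t_0}\hat u(t)$. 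Evaluating at $t=\tau$,
\[
u(t_0+\tau)=e^{-\lambda_1 t_0}\psi_1(\tau)=e^{-\lambda_1(t_0+\tau)}\varphi_1=\psi_1(t_0+\tau),
\]
so that, setting $T_R:=t_0+\tau$ and taking the control equal to $0$ on $[0,t_0]$ and to $\hat p(\cdot-t_0)$ on $[t_0,T_R]$ (which lies in $L^2(0,T_R)$), I reach $\psi_1(T_R)$ exactly.

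I expect the delicate point to be precisely this exact-landing issue rather than the decay estimate: because $\lambda_1$ may be positive and $a\ne1$, the free flow rescales the $\varphi_1$-component, so one cannot simply drive $u(t_0)$ itself into $B_{R_\tau}(\varphi_1)$, nor can homogeneity alone be expected to return the target $\psi_1$ rather than some multiple $c\psi_1$. The renormalization $z=e^{\lambda_1 t_0}u(t_0)$ combined with the linearity of \eqref{u} in $u_0$ is exactly what makes the scaling conspire to land on $\psi_1(T_R)$ itself. The other hypothesis I am relying on is the simplicity of the first eigenvalue (equivalently $\lambda_2>\lambda_1$), which guarantees that the free semigroup strictly contracts $\varphi_1^\perp$; without it the orthogonal part need not decay and the first phase would break down.
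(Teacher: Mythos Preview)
Your proof is correct and follows the same two-phase strategy as the paper: a free-evolution phase with $p\equiv0$ to contract the $\varphi_1^\perp$-component (using $\lambda_2>\lambda_1$, which the paper also uses implicitly), followed by an application of the local result Theorem~\ref{teo1}. The only difference is organizational: the paper first isolates the case $\lambda_1=0$ as a lemma (working with $v=u-\varphi_1$) and then reduces the general case to it via the global change of unknown $z(t)=e^{\lambda_1 t}u(t)$, which solves the same equation with $A$ replaced by $A_1=A-\lambda_1 I$; you instead treat both cases at once by the single renormalization $z=e^{\lambda_1 t_0}u(t_0)$ combined with the homogeneity of the bilinear equation in the initial datum. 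Your route is slightly more direct, as it sidesteps the verification that the shifted pair $\{A_1,B\}$ inherits $1$-null controllability and the control-cost bound.
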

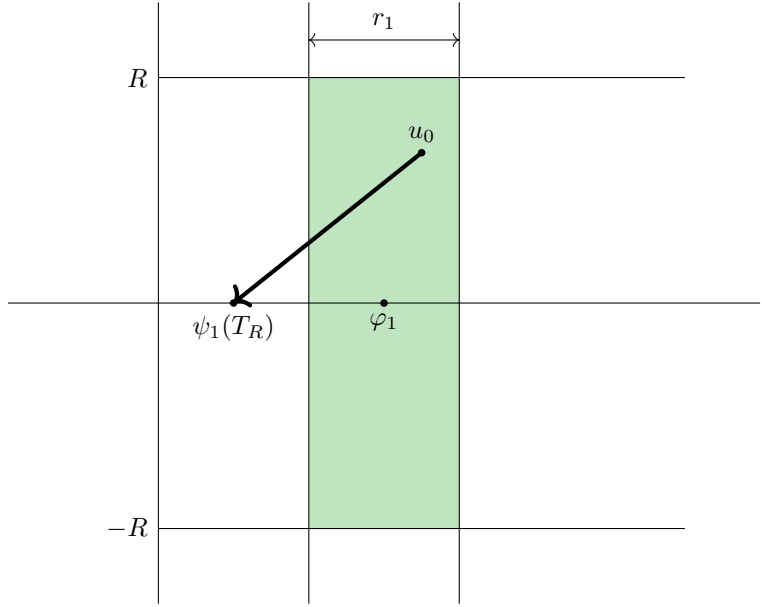
\begin{figure}[ht!]
\centering\begin{tikzpicture}
\fill[green(ryb)!30](4,-3)--(6,-3)--(6,3)--(4,3)--cycle;
\draw[] (0,0)  -- (10,0);
\draw[] (2,4) -- (2,-4);
\fill(5,0) node[below]{\footnotesize{$\varphi_1$}} circle (.05);
\fill(3,0) node[below]{\footnotesize{$\psi_1(T_R)$}} circle (.05);
\draw[] (4,-4) -- (4,4);
\draw[] (6,-4) -- (6,4);
\draw[] (2,3) node[left]{\footnotesize{$R$}} -- (9,3);
\draw[] (2,-3) node[left]{\footnotesize{$-R$}} -- (9,-3);
\draw[<->] (4,3.5) --node[above]{\footnotesize{$r_1$}} (6,3.5);
\fill(5.5,2) node[above]{\footnotesize{$u_0$}} circle (.05);
\draw[ultra thick,->] (5.5,2) -- (3,0);
\end{tikzpicture}
\caption{the colored region represents the set of initial conditions that can be steered to the first eigensolution in  time $T_R$.}\label{fig1}
\end{figure}

Our second semi-global result, Theorem \ref{teoglobal0} below, ensures the exact controllability of all initial states $u_0\in X\setminus \varphi_1^\perp$  to the evolution of their orthogonal projection along the first eigensolution. Such a function is defined by
\begin{equation}\label{exactphi1}
\phi_1(t)=\langle u_0,\varphi_1\rangle \psi_1(t), \quad\forall\, t \geq 0,
\end{equation}
where $\psi_1$ is the first eigensolution.

\begin{thm}\label{teoglobal0}
Let $A:D(A)\subset X\to X$ be a densely defined linear operator such that \eqref{ipA} holds with $\sigma=0$ and let $B:X\to X$ be a bounded linear operator. Let \{A,B\} be a 1-null controllable pair which satisfies \eqref{bound-control-cost}.
Then, for any $R>0$ there exists $T_R>0$ such that for all $u_0\in X$ with
\begin{equation}\label{cone}
\norm{u_0-\langle u_0,\varphi_1\rangle\varphi_1} \leq R |\langle u_0,\varphi_1\rangle|,
\end{equation}
system \eqref{u} is exactly controllable to $\phi_1$, defined in \eqref{exactphi1}, in time $T_R$.
\end{thm}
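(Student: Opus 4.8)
The plan is to reduce Theorem~\ref{teoglobal0} to the semi-global result already established in Theorem~\ref{teoglobal}, exploiting the scaling invariance of \eqref{u}. The key structural observation is that, for a fixed control $p$, equation \eqref{u} is linear and homogeneous in the state: if $u$ is the solution associated with initial datum $u_0$ and control $p$, then for every scalar $c$ the function $cu$ solves \eqref{u} with initial datum $cu_0$ and the \emph{same} control $p$, since $(cu)'+A(cu)+pB(cu)=c(u'+Au+pBu)=0$. This lets us transfer controllability across rescalings of the initial datum while keeping the control unchanged.

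First I would dispose of the degenerate case. If $\langle u_0,\varphi_1\rangle=0$, then the cone condition \eqref{cone} forces $\norm{u_0}\leq0$, hence $u_0=0$; in this situation $\phi_1\equiv0$ and the choice $p\equiv0$ gives $u\equiv0=\phi_1$, so the conclusion is trivial. We may therefore assume $c:=\langle u_0,\varphi_1\rangle\neq0$ and set $\tilde u_0:=u_0/c$.

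Next I would check that $\tilde u_0$ meets the hypotheses of Theorem~\ref{teoglobal} for the same value of $R$. By construction $\langle\tilde u_0,\varphi_1\rangle=1$, so the first condition in \eqref{ipu0} holds trivially, the quantity $|\langle\tilde u_0,\varphi_1\rangle-1|$ being equal to $0<r_1$. For the second, dividing \eqref{cone} by $|c|$ yields
\[
\norm{\tilde u_0-\langle\tilde u_0,\varphi_1\rangle\varphi_1}=\frac{1}{|c|}\norm{u_0-\langle u_0,\varphi_1\rangle\varphi_1}\leq R.
\]
Applying Theorem~\ref{teoglobal} to $\tilde u_0$ produces a time $T_R>0$ depending only on $R$ and a control $p\in L^2(0,T_R)$ for which the associated solution $\tilde u$ satisfies $\tilde u(T_R)=\psi_1(T_R)$.

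Finally I would scale back. By the homogeneity noted above, the solution of \eqref{u} with initial datum $u_0=c\,\tilde u_0$ and control $p$ is exactly $u=c\,\tilde u$, so that
\[
u(T_R)=c\,\tilde u(T_R)=c\,\psi_1(T_R)=\langle u_0,\varphi_1\rangle\psi_1(T_R)=\phi_1(T_R),
\]
which is precisely the exact controllability to $\phi_1$ in time $T_R$, with $\phi_1$ as in \eqref{exactphi1}. I do not expect a serious obstacle: the whole argument rests on the linear homogeneity of \eqref{u} in $u$ together with the degree-one homogeneity of the cone \eqref{cone}. The only point needing mild care is verifying that the normalization places $\tilde u_0$ inside the strip of Theorem~\ref{teoglobal}, which happens automatically, since fixing the first coordinate equal to $1$ renders the strip condition vacuous while the cone condition rescales exactly onto the ball of radius $R$ appearing in \eqref{ipu0}.
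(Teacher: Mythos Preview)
Your proposal is correct and follows essentially the same approach as the paper: both proofs normalize by dividing through by $\langle u_0,\varphi_1\rangle$, verify that the rescaled initial datum lands in the strip of Theorem~\ref{teoglobal}, apply that theorem, and then scale back using the homogeneity of \eqref{u} in $u$; the degenerate case $\langle u_0,\varphi_1\rangle=0$ is handled identically. Your write-up is in fact slightly more explicit than the paper's about why the scaling preserves the control and why the strip condition is automatically met.
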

Notice that, denoting by $\theta$ the angle between the half-lines $\RR_+\varphi_1$ and $\RR_+ u_0$, condition \eqref{cone} is equivalent to
\begin{equation*}
|\tan\theta|\leq R,
\end{equation*}
which defines a closed cone, say $Q_R$, with vertex at $0$ and axis equal to $\RR\varphi_1$ (see Figure~\ref{fig2}). Therefore, Theorem \ref{teoglobal0} ensures a uniform controllability time for all initial conditions lying in $Q_R$.
We observe that, since $R$ is any arbitrary positive constant, all initial conditions $u_0\in X\setminus \varphi_1^\perp$ can be steered to the corresponding projection to the first eigensolution. Indeed, for any $u_0\in X\setminus \varphi_1^\perp$, we define 
\begin{equation*}
R_0:=\norm{\frac{u_0}{\langle u_0,\varphi_1\rangle}-\varphi_1}.
\end{equation*}
Then, for any $R\geq R_0$ condition \eqref{cone} is fulfilled:
\begin{equation*}
\frac{1}{|\langle u_0,\varphi_1\rangle|}\norm{u_0-\langle u_0,\varphi_1\rangle\varphi_1}=R_0\leq R.
\end{equation*}
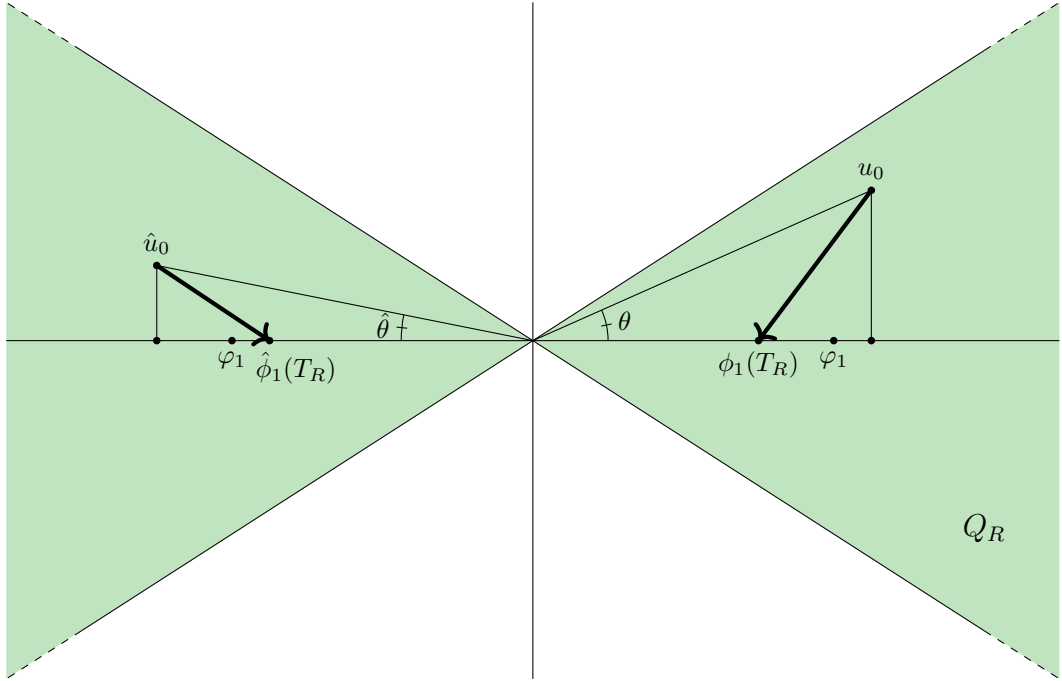
\begin{figure}[ht!]
\centering\begin{tikzpicture}
\fill[green(ryb)!30](0,-4.5)--(7,0)--(0,4.5)--cycle;
\fill[green(ryb)!30](7,0)--(14,-4.5)--(14,4.5)--cycle;
\coordinate (v2) at (7,0);
\coordinate(v4) at (11.5,2);
\coordinate(v5) at (11.5,0);
\coordinate(v8) at (2,1);
\coordinate(v9) at (2,0);
\tkzMarkAngle[size=1cm](v5,v2,v4);
\tkzLabelAngle[pos=1.25](v5,v2,v4){\footnotesize{$\theta$}};
\tkzMarkAngle[size=1.75cm](v8,v2,v9);
\tkzLabelAngle[pos=2](v8,v2,v9){\footnotesize{$\hat{\theta}$}};
\draw[] (0,0)  -- (14,0);
\draw[] (7,4.5) -- (7,-4.5);
\draw[] (7,0) -- (13,3.86);
\draw[dashed] (13,3.86) -- (14,4.5);
\draw[] (7,0) -- (13,-3.86);
\draw[dashed] (13,-3.86) -- (14,-4.5);
\draw[] (7,0) -- (1,3.86);
\draw[dashed] (1,3.86) -- (0,4.5);
\draw[] (7,0) -- (1,-3.86);
\draw[dashed] (1,-3.86) -- (0,-4.5);
\fill(11,0) node[below]{\footnotesize{$\varphi_1$}} circle (.05);
\fill(3,0) node[below]{\footnotesize{$\varphi_1$}} circle (.05);
\fill(10,0) node[below]{\footnotesize{$\phi_1(T_R)$}} circle (.05);
\fill(3.5,0) node[below]{\footnotesize{$\qquad\hat{\phi}_1(T_R)$}} circle (.05);
\fill(11.5,2) node[above]{\footnotesize{$u_0$}} circle (.05);
\fill(2,1) node[above]{\footnotesize{$\hat{u}_0$}} circle (.05);
\draw[] (11.5,2) -- (11.5,0);
\draw[] (11.5,2) -- (7,0);
\draw[] (2,1) -- (2,0);
\draw[] (2,1) -- (7,0);
\fill(11.5,0) circle (.05);
\fill(2,0) circle (.05);
\draw[ultra thick,->] (11.5,2) -- (10,0);
\draw[ultra thick,->] (2,1) -- (3.5,0);
\fill(13,-2.5) node{$Q_R$};
\end{tikzpicture}
\caption{fixed any $R>0$, the set of initial conditions exactly controllable in time $T_R$ to their projection along the first eigensolution is indicated by the colored cone $Q_R$.}\label{fig2}
\end{figure}

Finally, we would like to recall part of the huge literature on bilinear control of evolution equations, referring the reader to the references in \cite{acu} for more details. A seminal paper in this field is certainly the one by Ball, Marsden, Slemrod \cite{bms}, which establishes that system \eqref{u} is not controllable. More precisely, denoting by  $u(t;u_0,p)$  the unique solution of \eqref{u},  the attainable set from $u_0$ defined by
\begin{equation*}
S(u_0)=\{ u(t;u_0,p);t\geq 0, p\in L^r_{loc}([0,+\infty),\RR),r>1\}
\end{equation*}
is shown in \cite{bms} to have a dense complement.

As for positive results, we would like to mention Beauchard \cite{b}, on bilinear control of the wave equation, and Beauchard, Laurent \cite{bl} on  bilinear control of the Schr{\"o}dinger equation (see also \cite{beau} for a first result on this topic). The results obtained in these papers rely on linearization around the ground state, the use of the inverse mapping theorem, and a regularizing effect which takes place in both problems. Local  controllability is proved for any positive time for the Schr{\"o}dinger equation and for a sufficiently (optimal) large time for the wave equation. Both papers require the condition 
\begin{equation}
\label{B}
\langle B\varphi_1,\varphi_k\rangle \neq 0,\quad \forall \, k \geq 1
\end{equation}
to be satisfied, together with a suitable asymptotic behavior with respect to the eigenvalues. Notice that the structure of the second order operator and the fact that the  space dimension equals one allow the authors of  \cite{b} and \cite{bl} to apply Ingham's theory (\cite{kl}) which requires a gap condition on the eigenvalues. We further observe that even if the genericity of assumption \eqref{B} is proved in both papers \cite{b,bl}, only few explicit examples of operators $B$ of multiplication type are available in the literature. We refer to \cite{au} where a general constructive algorithm for building potentials which satisfy the infinite non-vanishing conditions \eqref{B}, and further the asymptotic condition \eqref{ipB}, is established.

If \eqref{B} is violated then it has been first shown by Coron \cite{cor}, for a model describing a particle in a moving box, that there exists a minimal time for local exact controllability to hold. This model couples the Schr\"odinger equation with two ordinary differential equations modeling the speed and acceleration of the box (see also Beauchard, Coron \cite{beaucor} for local exact controllability for large time). A further paper by Beauchard and Morancey \cite{bm} for the Schr\"odinger equation extends \cite{bl} to cases for which the above condition is violated, that is, when there exist integers $k$ such that 
$\langle B\varphi_1,\varphi_k\rangle =0$. 

An example of controllability to trajectories for nonlinear parabolic systems is studied in \cite{fgip}, where, however, additive controls are considered. In such an example, one can obtain controllability to free trajectories by Carleman estimates and inverse mapping arguments. Such a strategy seems hard to adapt to the current setting.

The paper which has the strongest connection with our work is the one by Beauchard and Marbach in \cite{beau-mar}, where the authors study small-time null controllability for  a scalar-input heat equation in one space dimension, with nonlinear lower order terms. Among the results of such paper, we mention  null-controllability to the first eigenstate of a heat equation with bilinear control. From this result it would be possible to deduce local  controllability only to the first eigenstate of the heat equation subject to Neumann boundary conditions. It is worth noting that \cite{beau-mar} addresses a specific parabolic equation. Moreover, the methods developed therein, relying on the so-called source term procedure, are totally different from ours.

We observe that the bilinear controls we use in this paper are just scalar functions of time. This fact explains why applications mainly concern problems in low space dimension, like the results in \cite{beau,b,beaucor,bl,beau-mar,bm,cor}. 
A stronger control action could be obtained  by letting controls depend on time and space. We refer the reader to \cite{cfk,ck} for more on this subject.

This paper is organized as follows. In section 2, we have collected some preliminaries as well as results from \cite{acu} that we need in order to prove Theorem \ref{teo1}. Section 3 contains such a proof, while section 4 is devoted to demonstrate Theorem \ref{Thm-suff-cond}. In section 5, we give the proof of our semi-global results (Theorems \ref{teoglobal} and \ref{teoglobal0}). Finally,  applications of Theorem \ref{teo1} to parabolic problems are analyzed is section 6.
\section{Preliminaries}
In this section, we recall a well-known result for the well-posedness of our control problem and the regularity of the solution as well as some results from \cite{acu} that are necessary for the proof of Theorem \ref{teo1}. Moreover, we will remind the fundamental definition of $j$-null controllable pair.

We recall our general functional frame. Let $(X,\langle\cdot,\cdot\rangle,\norm{\cdot})$ be a separable Hilbert space, let $A:D(A)\subset X\to X$ be a densely defined linear operator with the following properties
\begin{equation}\label{ipAA}
\begin{array}{ll}
(a) & A \mbox{ is self-adjoint},\\
(b) &\exists\,\sigma\geq0\,:\,\langle Ax,x\rangle \geq -\sigma\norm{x} ^2,\,\, \forall\, x\in D(A),\\
(c) &\exists\,\lambda>-\sigma \mbox{ such that }(\lambda I+A)^{-1}:X\to X \mbox{ is compact}.
\end{array}
\end{equation}
We denote by $\{\lambda_k\}_{k\in\NN^*}$ the eigenvalues of $A$, which can be ordered, whithout loss of generality, as $-\sigma\leq\lambda_k\leq\lambda_{k+1}\to\infty$ as $k\to\infty$, and by $\{\varphi_k\}_{k\in\NN^*}$ the corresponding eigenfunctions, $A\varphi_k=\lambda_k\varphi_k,$ with $\norm{\varphi_k}=1$, $\forall\,k\in\NN^*$.

Let $B:X\to X$ be a bounded linear operator. Fixed $T>0$, consider the following bilinear control problem
\begin{equation}\label{a1f}\left\{
\begin{array}{ll}
u'(t)+A u(t)+p(t)Bu(t)+f(t)=0,& t\in [0,T]\\\\
u(0)=u_0.
\end{array}\right.
\end{equation}
If $u_0\in X$, $p\in L^2(0,T)$ and $f\in L^2(0,T;X)$, a function $u\in C^0([0,T],X)$ is called a \emph{mild solution} of \eqref{a1f} if it satisfies
\begin{equation*}
u(t)=e^{-tA }u_0-\int_0^t e^{-(t-s)A}[p(s)Bu(s)+f(s)]ds, \quad\forall t\in[0,T].
\end{equation*}

We introduce the following notation: 
\begin{equation*}\begin{array}{l}
\norm{f}_{2}:=\norm{f}_{L^2(0,T;X)},\qquad\forall\,f\in L^2(0,T;X)\\\\
\norm{f}_{\infty}:=\norm{f}_{C([0,T];X)}=\sup_{t\in [0,T]}\norm{f(t)} ,\qquad\forall\, f\in C([0,T];X).
\end{array}
\end{equation*}
The well-posedness of \eqref{a1f} is ensured by the following proposition (see \cite{bms} for a proof).
\begin{prop}\label{propa24}
Let $T>0$. For any $u_0\in X$, $p\in L^2(0,T)$ and $f\in L^2(0,T;X)$ there exists a unique mild solution of \eqref{a1f}.

Furthermore, $u(\cdot)$ satisfies
\begin{equation}\label{a5}
\norm{u}_{\infty}\leq C(T) (\norm{u_0} +\norm{f}_{2}),
\end{equation}
for a suitable positive constant $C(T)$.
\end{prop}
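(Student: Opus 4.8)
The plan is to recast the mild formulation as a fixed-point problem in $C([0,T];X)$, solve it by the contraction mapping principle, and then read off the bound \eqref{a5} from Gronwall's inequality. First I would isolate the only property of $A$ that is actually needed here: by (a)--(b) the operator $A$ is self-adjoint with spectrum contained in $[-\sigma,+\infty)$, so the spectral theorem shows that $-A$ generates a strongly continuous (indeed analytic) semigroup $e^{-tA}=\int e^{-t\lambda}\,dE_\lambda$ satisfying the operator-norm bound $\norm{e^{-tA}}\le e^{\sigma t}$ for all $t\ge0$. (The compact-resolvent hypothesis (c) is not used for this statement; it serves only to produce the eigenbasis exploited elsewhere.)

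Next, on the Banach space $C([0,T];X)$ I would introduce the map
\begin{equation*}
(\Phi u)(t)=e^{-tA}u_0-\int_0^t e^{-(t-s)A}\big[p(s)Bu(s)+f(s)\big]\,ds,
\end{equation*}
whose fixed points are exactly the mild solutions of \eqref{a1f}. Using the semigroup bound and the boundedness of $B$, for any $u,v$ one gets
\begin{equation*}
\norm{(\Phi u)(t)-(\Phi v)(t)}\le \norm{B}\,e^{\sigma T}\int_0^t |p(s)|\,\norm{u(s)-v(s)}\,ds .
\end{equation*}
Since $p\in L^2(0,T)\subset L^1(0,T)$, estimating directly in $\norm{\cdot}_\infty$ would yield a contraction only for $T$ small. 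The hard part is removing this smallness restriction, and the clean device is a weighted norm: set $P(t)=\int_0^t|p(s)|\,ds$, which is continuous and nondecreasing, and equip $C([0,T];X)$ with $\norm{u}_*=\sup_{t\in[0,T]}e^{-\lambda P(t)}\norm{u(t)}$ for a parameter $\lambda>0$. From $\int_0^t |p(s)|e^{\lambda P(s)}\,ds=\lambda^{-1}\big(e^{\lambda P(t)}-1\big)$ one deduces
\begin{equation*}
e^{-\lambda P(t)}\norm{(\Phi u)(t)-(\Phi v)(t)}\le \frac{\norm{B}\,e^{\sigma T}}{\lambda}\,\norm{u-v}_* ,
\end{equation*}
so choosing $\lambda>\norm{B}e^{\sigma T}$ makes $\Phi$ a strict contraction for $\norm{\cdot}_*$. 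Because $P$ is bounded on $[0,T]$, this norm is equivalent to $\norm{\cdot}_\infty$, and the Banach fixed-point theorem delivers the unique mild solution $u\in C([0,T];X)$.

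Finally, to prove \eqref{a5} I would substitute the fixed point back into the mild formula and, using Cauchy--Schwarz on the $f$-term, estimate
\begin{equation*}
\norm{u(t)}\le e^{\sigma T}\big(\norm{u_0}+\sqrt{T}\,\norm{f}_2\big)+\norm{B}e^{\sigma T}\int_0^t|p(s)|\,\norm{u(s)}\,ds .
\end{equation*}
Gronwall's inequality then gives
\begin{equation*}
\norm{u}_\infty\le e^{\sigma T}\max\{1,\sqrt{T}\}\,\exp\!\big(\norm{B}e^{\sigma T}\sqrt{T}\,\norm{p}_{L^2(0,T)}\big)\,\big(\norm{u_0}+\norm{f}_2\big),
\end{equation*}
which is precisely \eqref{a5} with an explicit $C(T)$ depending on $T$, $\sigma$, $\norm{B}$ and $\norm{p}_{L^2(0,T)}$. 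As noted, the only genuine obstacle is the $T$-uniformity of the contraction; the weighted-norm trick resolves it, and the same conclusion could instead be reached by partitioning $[0,T]$ into finitely many subintervals on which $\int|p|$ is small and iterating the local contraction argument.
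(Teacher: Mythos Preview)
Your argument is correct and self-contained. Note, however, that the paper does not actually supply a proof of this proposition: it simply refers the reader to \cite{bms} for the well-posedness of \eqref{a1f}. Your fixed-point argument with the weighted norm (or, equivalently, the subinterval iteration you mention at the end) is the standard route and is essentially what one finds in that reference; in this sense there is nothing to compare, and your write-up fills in what the paper leaves to citation.
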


\begin{oss}\label{rmk-regularity}
\emph{Under the hypotheses of Theorem \ref{propa24} it is possible to prove that the solution is more regular. Indeed, for every $\eps\in(0,T)$ it holds that $u\in H^1(\eps,T;X)\cap L^2(\eps,T;D(A))$ and the following identity is satisfied
\begin{equation*}
u'(t)+A u(t)+p(t)Bu(t)+f(t)=0,\quad \text{for a.e. }t\in[\eps,T].
\end{equation*}
Furthermore, if $u_0=0$ then $u\in H^1(0,T;X)\cap L^2(0,T;D(A))$ (it can be deduced by applying, for instance, \cite[Proposition 3.1, p.130]{bd}).}
\end{oss}

Let us now consider the following nonlinear control problem
\begin{equation}\label{v}
\left\{\begin{array}{ll}
v'(t)+A v(t)+p(t)Bv(t)+p(t)B\varphi_j=0,&t\in[0,T]\\\\
v(0)=v_0,
\end{array}\right.
\end{equation}
where $\varphi_j$ is the $j$th eigenfunction of $A$. We denote by $v(\cdot;v_0,p)$ the solution of \eqref{v} associated with initial condition $v_0$ and control $p$.

The following result establishes a bound for the solution of \eqref{v} in terms of the initial condition. We give its proof in \ref{appendix} for the sake of clarity and completeness. This proof follows that of \cite[Proposition 4.3]{acu}, with a different presentation, in particular with respect to the assumptions in the statement.
\begin{prop}\label{prop38}
Let $T>0$. Let $A:D(A)\subset X\to X$ be a densely defined linear operator that satisfies \eqref{ipAA} and let $B:X\to X$ be a bounded linear operator. Let $v_0\in X$ and let $p\in L^2(0,T)$ be such that 
\begin{equation}\label{prelim-p-bound}
\norm{p}_{L^2(0,T)}\leq N_T\norm{v_0},
\end{equation}
with $N_T$ a positive constant.

Then, $v(\cdot;v_0,p)$ verifies
\begin{equation}\label{unifv}
\sup_{t\in[0,T]}\norm{v(t;v_0,p)}^2\leq C_1(T,\norm{v_0})\norm{v_0}^2,
\end{equation}
where $C_1(T,\norm{v_0}):=e^{(2\sigma+\norm{B})T+2\norm{B}N_T\sqrt{T}\norm{v_0}}(1+\norm{B}N_T^2)$ and $\sigma$ is defined in \eqref{ipAA}.
\end{prop}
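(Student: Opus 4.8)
The plan is to derive \eqref{unifv} by a standard energy estimate for $\norm{v(t)}^2$ combined with Gr\"onwall's inequality, taking care to reproduce \emph{exactly} the constant $C_1(T,\norm{v_0})$. First I would observe that problem \eqref{v} is a particular instance of \eqref{a1f}: setting $f(t):=p(t)B\varphi_j$, which belongs to $L^2(0,T;X)$ since $p\in L^2(0,T)$ and $B\varphi_j\in X$ is a fixed vector, the equation in \eqref{v} reads $v'+Av+p(t)Bv+f=0$. Hence Proposition \ref{propa24} yields existence and uniqueness of the mild solution, while Remark \ref{rmk-regularity} gives, for every $\eps\in(0,T)$, the interior regularity $v\in H^1(\eps,T;X)\cap L^2(\eps,T;D(A))$ together with the pointwise identity $v'(t)+Av(t)+p(t)Bv(t)+p(t)B\varphi_j=0$ for a.e.\ $t\in[\eps,T]$. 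This is precisely what is needed to differentiate $t\mapsto\norm{v(t)}^2$ and to make sense of $\langle Av(t),v(t)\rangle$.

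Next, on $[\eps,T]$ I would compute, for a.e.\ $t$,
\begin{equation*}
\frac12\frac{d}{dt}\norm{v(t)}^2=\langle v'(t),v(t)\rangle=-\langle Av(t),v(t)\rangle-p(t)\langle Bv(t),v(t)\rangle-p(t)\langle B\varphi_j,v(t)\rangle.
\end{equation*}
Using assumption \eqref{ipAA}(b) for the first term, $-\langle Av,v\rangle\leq\sigma\norm{v}^2$; the Cauchy--Schwarz inequality together with $\norm{B}$ for the second, $-p\langle Bv,v\rangle\leq|p|\,\norm{B}\,\norm{v}^2$; and $\norm{\varphi_j}=1$ for the third, $-p\langle B\varphi_j,v\rangle\leq|p|\,\norm{B}\,\norm{v}$. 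To this last term I would apply Young's inequality in the balanced form $|p|\,\norm{B}\,\norm{v}\leq\tfrac{\norm{B}}2\norm{v}^2+\tfrac{\norm{B}}2|p|^2$, which is exactly the split that makes the constants fit. Collecting the estimates gives the differential inequality
\begin{equation*}
\frac{d}{dt}\norm{v(t)}^2\leq\big((2\sigma+\norm{B})+2\norm{B}\,|p(t)|\big)\norm{v(t)}^2+\norm{B}\,|p(t)|^2\quad\text{for a.e. }t\in[\eps,T].
\end{equation*}

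Finally I would integrate by Gr\"onwall's lemma (integrating-factor form). Writing $a(t):=(2\sigma+\norm{B})+2\norm{B}|p(t)|\geq0$, and using $\int_s^t a\leq\int_0^T a$, one obtains for $t\in[\eps,T]$
\begin{equation*}
\norm{v(t)}^2\leq e^{\int_0^T a}\Big(\norm{v(\eps)}^2+\norm{B}\int_0^T|p|^2\Big).
\end{equation*}
Here the Cauchy--Schwarz inequality and hypothesis \eqref{prelim-p-bound} give $\int_0^T a\leq(2\sigma+\norm{B})T+2\norm{B}\sqrt{T}\,\norm{p}_{L^2(0,T)}\leq(2\sigma+\norm{B})T+2\norm{B}N_T\sqrt{T}\,\norm{v_0}$, while $\norm{B}\int_0^T|p|^2=\norm{B}\norm{p}_{L^2(0,T)}^2\leq\norm{B}N_T^2\norm{v_0}^2$. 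Letting $\eps\to0^+$ and invoking the continuity $v\in C([0,T];X)$, so that $\norm{v(\eps)}\to\norm{v_0}$, the bound extends to all of $[0,T]$ and reproduces exactly $C_1(T,\norm{v_0})\norm{v_0}^2$.

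I expect the main (though mild) obstacle to be the rigorous justification of the energy identity: since the regularity of Remark \ref{rmk-regularity} is only interior when $v_0\neq0$, one cannot differentiate directly at $t=0$ but must work on $[\eps,T]$ and pass to the limit $\eps\to0^+$. The only other delicate point is the bookkeeping of constants: the balanced Young split producing the coefficient $\norm{B}/2$ is what yields $C_1$ precisely, whereas a cruder estimate (e.g.\ bounding $\norm{v}\leq\tfrac12(1+\norm{v}^2)$) would enlarge the factor multiplying $|p|$ in the exponent.
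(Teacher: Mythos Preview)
Your proposal is correct and matches the paper's own proof essentially step by step: the same interior-regularity argument on $[\eps,T]$ via Remark~\ref{rmk-regularity}, the same energy computation using \eqref{ipAA}(b) and the balanced Young split $|p|\,\norm{B}\,\norm{v}\leq\tfrac{\norm{B}}2\norm{v}^2+\tfrac{\norm{B}}2|p|^2$, Gr\"onwall, Cauchy--Schwarz on $\int_0^T|p|$, the bound \eqref{prelim-p-bound}, and the limit $\eps\to0^+$. You have also correctly identified the only subtle point (the lack of regularity at $t=0$) and handled it exactly as the paper does.
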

For any $0\leq s_0\leq s_1$, we now introduce the linear problem
\begin{equation}\label{newlin}
\begin{cases}
y'(t)+Ay(t)+p(t)B\varphi_j=0,&t\in[s_0,s_1]\\\\
y(s_0)=y_0
\end{cases}
\end{equation}
and we denote by $y(\cdot;y_0,s_0,p)$ the solution associated with initial condition $y_0$ at time $s_0$ and control $p$. Let us recall that for any fixed $T>0$ and $j\in\NN^*$, we say that the pair $\{A,B\}$ is \emph{$j$-null controllable in time $T$} if there exists a constant $N_T$ such that for every $y_0\in X$ there exists a control $p\in L^2(0,T)$ with
\begin{equation}\label{estimpnew}
\norm{p}_{L^2(0,T)}\leq N_T\norm{y_0},
\end{equation}
for which the solution of \eqref{newlin} with $s_0=0$ and $s_1=T$ satisfies $y(T;y_0,0,p)=0$. In this case, we define the \emph{control cost} as
\begin{equation}\label{def-control-cost}
N(T)=\sup_{\norm{y_0}=1}\inf\left\{\norm{p}_{L^2(0,T)}\,:\, y(T;y_0,0,p)=0\right\}.\end{equation}
With an approximation argument one realizes that \eqref{estimpnew} holds with $N_T=N(T)$, that is, for every $y_0\in X$ there exists $p\in L^2(0,T)$ with $\norm{p}_{L^2(0,T)}\leq N(T)\norm{y_0}$ such that $y(T;y_0,0,p)=0$.

Now, consider the following control problem
\begin{equation}\label{w}
\left\{\begin{array}{ll}
w'(t)+Aw(t)+p(t)Bv(t)=0,&t\in[0,T]\\\\
w(0)=0,
\end{array}\right.
\end{equation}
with $v$ the solution of \eqref{v}. We denote by $w(\cdot;0,p)$ the solution of \eqref{w} associated with control $p$.

In the following proposition we give a quadratic estimate of the solution of \eqref{w} in terms of the initial condition of the Cauchy problem solved by $v$. We give its proof in \ref{appendix} for the sake of clarity and completeness. This proof follows that of \cite[Proposition 4.4]{acu}, with a different presentation and a different hypothesis \eqref{v0} compared to the corresponding ones in the statement of \cite[Proposition 4.4]{acu}.
\begin{prop}\label{prop39}
Let $T>0$, $A:D(A)\subset X\to X$ be a densely defined linear operator that satisfies \eqref{ipAA} and $B:X\to X$ be a bounded linear operator. Let $p\in L^2(0,T)$ verify \eqref{prelim-p-bound} with $N_T=N(T)$ and $v_0\in X$ be such that 
\begin{equation}\label{v0}
N(T)\norm{v_0}\leq 1.
\end{equation}
Then, $w(\cdot;0,p)$ satisfies
\begin{equation}\label{wT}
\norm{w(T;0,p)}\leq K(T)\norm{v_0}^2,
\end{equation}
where
\begin{equation}\label{KT}
K^2(T):=\norm{B}^2N(T)^2e^{(4\sigma+\norm{B}+1)T+2\norm{B}\sqrt{T}}\left(1+\norm{B}N(T)^2\right).
\end{equation}
\end{prop}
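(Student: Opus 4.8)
The plan is to read off the estimate directly from the mild-solution representation of \eqref{w}. Since $w(0)=0$, for every $t\in[0,T]$ one has
\[
w(t)=-\int_0^t e^{-(t-s)A}\,p(s)\,Bv(s)\,ds,
\]
and hence, evaluating at $t=T$ and using that $A$ is self-adjoint with $A\geq-\sigma I$ (so that $\norm{e^{-(T-s)A}}\leq e^{\sigma(T-s)}\leq e^{\sigma T}$ for $s\in[0,T]$), I would bound
\[
\norm{w(T)}\leq \norm{B}\,e^{\sigma T}\int_0^T |p(s)|\,\norm{v(s)}\,ds.
\]

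The decisive step is to turn the right-hand side into something \emph{quadratic} in $\norm{v_0}$. Applying the Cauchy--Schwarz inequality to the product $|p(s)|\cdot\norm{v(s)}$ and then estimating the $L^2$-norm of $v$ by its supremum,
\[
\int_0^T |p(s)|\,\norm{v(s)}\,ds\leq \norm{p}_{L^2(0,T)}\left(\int_0^T\norm{v(s)}^2\,ds\right)^{1/2}\leq \sqrt{T}\,\norm{p}_{L^2(0,T)}\sup_{t\in[0,T]}\norm{v(t)}.
\]
At this point the two hypotheses come into play: \eqref{prelim-p-bound} with $N_T=N(T)$ gives $\norm{p}_{L^2(0,T)}\leq N(T)\norm{v_0}$, while \eqref{v0}, namely $N(T)\norm{v_0}\leq1$, is precisely what Proposition \ref{prop38} needs in order to control $\sup_t\norm{v(t)}$ and to simplify the constant $C_1$. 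Indeed, inserting $N(T)\norm{v_0}\leq1$ into the exponent of $C_1(T,\norm{v_0})$ in \eqref{unifv} yields
\[
\sup_{t\in[0,T]}\norm{v(t)}\leq e^{(\sigma+\norm{B}/2)T+\norm{B}\sqrt{T}}\left(1+\norm{B}N(T)^2\right)^{1/2}\norm{v_0}.
\]

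Combining the three displays bounds $\norm{w(T)}$ by $\norm{v_0}^2$ times the factor $\norm{B}\,N(T)\sqrt{T}\,e^{(2\sigma+\norm{B}/2)T+\norm{B}\sqrt{T}}(1+\norm{B}N(T)^2)^{1/2}$; squaring produces $\norm{B}^2N(T)^2\,T\,e^{(4\sigma+\norm{B})T+2\norm{B}\sqrt{T}}(1+\norm{B}N(T)^2)$, which is exactly $K^2(T)$ of \eqref{KT} except that the target exponent carries an extra summand $+T$. The final reconciliation is immediate from the elementary inequality $T\leq e^{T}$, which absorbs the stray factor $T$ and delivers \eqref{wT}. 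I do not anticipate any genuine obstacle here: this is a direct energy-type estimate, and the only delicate part is the bookkeeping of constants so that they match \eqref{KT} precisely, together with the correct use of \eqref{v0} to discharge the $\norm{v_0}$-dependence hidden inside $C_1$.
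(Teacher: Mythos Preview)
Your argument is correct and reaches exactly \eqref{KT}, but it follows a genuinely different route from the paper. The paper proceeds by an energy estimate: it takes the scalar product of the equation in \eqref{w} with $w(t)$ (using the extra regularity $w\in H^1(0,T;X)\cap L^2(0,T;D(A))$ available because $w(0)=0$, cf.\ Remark~\ref{rmk-regularity}), applies Young's inequality to split $|p(t)|\norm{Bv(t)}\norm{w(t)}$, and then Gronwall. This produces $\sup_{t\in[0,T]}\norm{w(t)}^2\leq \norm{B}^2e^{(2\sigma+1)T}\norm{p}_{L^2}^2\sup_t\norm{v(t)}^2$, and the ``$+1$'' in the exponent of $K^2(T)$ comes from the Young split. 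By contrast, you work directly with the Duhamel formula and the semigroup bound $\norm{e^{-(T-s)A}}\leq e^{\sigma T}$, then Cauchy--Schwarz in time; your extra factor is $T$ rather than $e^T$, and you recover the stated constant via $T\leq e^T$. Your approach is arguably more elementary since it avoids invoking the $H^1$-in-time regularity needed to justify the differential inequality; on the other hand, the paper's energy method yields the slightly stronger conclusion $\sup_{t\in[0,T]}\norm{w(t)}\leq K(T)\norm{v_0}^2$, not just the value at $t=T$ (though only the latter is claimed in the statement). Both routes use Proposition~\ref{prop38} and hypothesis \eqref{v0} in the same way to simplify $C_1(T,\norm{v_0})$.
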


\section{Proof of Theorem \ref{teo1}}
Fixed any $j\in\NN^*$ and any $T>0$, our aim is to prove local exact controllability in time $T$ for the following problem
\begin{equation}\label{sys}\left\{\begin{array}{ll}
u'(t)+A u(t)+p(t)Bu(t)=0,& t\in[0,T]\\\\
u(0)=u_0,
\end{array}\right.
\end{equation}
to the $j$th eigensolution $\psi_j=e^{-\lambda_j t}\varphi_j$ of $A$, that is the solution of \eqref{sys} when $p=0$ and $u_0=\varphi_j$. Hereafter, we will denote by $u(\cdot;u_0,p)$ the solution of \eqref{sys} associated with initial condition $u_0$ and control $p$. 

We recall that $A:D(A)\subset X\to X$ is a densely defined linear operator that satisfies
\begin{equation}\label{ipAAA}
\begin{array}{ll}
(a) & A \mbox{ is self-adjoint},\\
(b) &\exists\,\sigma\geq0\,:\,\langle Ax,x\rangle \geq -\sigma\norm{x} ^2,\,\, \forall\, x\in D(A),\\
(c) &\exists\,\lambda>-\sigma \mbox{ such that }(\lambda I+A)^{-1}:X\to X \mbox{ is compact},
\end{array}
\end{equation}
and we denote by $\{\lambda_k\}_{k\in\NN^*}$ and $\{\varphi_k\}_{k\in\NN^*}$ the eigenvalues and the eigenfunctions of $A$, respectively. $B:X\to X$ is a bounded linear operator. The pair $\{A,B\}$ is assumed to be $j$-null controllable in any time, with control cost that satisfies
\begin{equation}\label{bound-control-costt}
N(\tau)\leq e^{\nu/\tau},\quad\forall\,0<\tau\leq T_0,
\end{equation}
for some constants $\nu,T_0>0$.

The proof of Theorem \ref{teo1} is divided into two main parts: the case $\lambda_j=0$, that we build by a series of steps, and the case $\lambda_j\neq0$. 

\subsection{Case $\lambda_j=0$}
If $\lambda_j=0$ our reference trajectory will be the stationary function $\psi_j\equiv\varphi_j$. Given $T>0$, we define $T_f$ as
\begin{equation}\label{T_f}
T_f:=\min\left\{T,\frac{\pi^2}{6},\frac{\pi^2}{6}T_0\right\},
\end{equation}
where $T_0$ is the constant in \eqref{bound-control-costt}. We will actually build a control $p\in L^2(0,T_f)$ such that $u(T_f;u_0,p)=\psi_j$, and then, by taking $p(t)\equiv 0$ for $t>T_f$, the solution $u$ of \eqref{sys} will remain forever on the target trajectory $\psi_j$.

Now, we define
\begin{equation}\label{T_1}
T_1:=\frac{6}{\pi^2}T_f,
\end{equation}
and we observe that $0<T_1\leq 1$. Then, we introduce the sequence $\{T_j\}_{j\in\NN^*}$ as
\begin{equation}\label{Tj}
T_j:=T_1/j^2,
\end{equation}
and the time steps
\begin{equation}\label{taun}
\tau_n=\sum_{j=1}^n T_j,\qquad\forall\, n\in\NN,
\end{equation}
with the convention that $\sum_{j=1}^0T_j=0$. Notice that $\sum_{j=1}^\infty T_j=\frac{\pi^2}{6}T_1=T_f$.
\begin{oss}\label{ossF32}
Note that the sequence of times $(T_j)_{j \in NN^*}$ is strictly decaying towards $0$, whereas the sequence of times
 $(\tau_j)_{j \in NN^*}$ is strictly increasing and converges to $T_f$.
 \end{oss}
 
Set $v:=u-\varphi_j$. We will consider the equation satisfied by $v$ on suitable intervals of time $[s_0,s_1]$ and suitable initial data $v^0$ at the initial time $s_0$, as follows.  Given any $0\leq s_0\leq s_1\leq T$, and any $v^0$ in $X$, $v$ is the solution of the following Cauchy problem
\begin{equation}\label{vv}
\left\{\begin{array}{ll}
v'(t)+A v(t)+p(t)Bv(t)+p(t)B\varphi_j=0,&t\in[s_0,s_1]\\\\
v(s_0)=v^0.
\end{array}\right.
\end{equation}
We denote by $v(\cdot;v^0,s_0,p)$ the solution of \eqref{vv} associated with initial condition $v^0$ at time $s_0$ and control $p$. Observe that proving the controllability of $u$ to $\psi_j=\varphi_j$ in time $T_f$ is equivalent to show the null controllability of $v$, that is, $v(T_f;v_0,0,p)=0$, where $v_0=u_0-\varphi_j$.

The strategy of the proof consists first of building a control $p_1\in L^2(0,T_1)$ such that at time $T_1$ the solution of \eqref{vv} can be estimated by the square of the initial condition. We then iterate the procedure on consecutive time intervals of the form $[\tau_{n-1},\tau_n]$: each time we construct a control $p_n\in L^2(\tau_{n-1},\tau_n)$ such that the solution of \eqref{vv} on $[\tau_{n-1},\tau_n]$ at time $\tau_n$ is estimated by the square of the initial condition on such interval. Hence, combining all those estimates and letting $n$ go to infinity, we finally deduce that there exists a control $p\in L^2_{loc}(0,+\infty)$ such that $v(T_f;v_0,0,p)=0$ and so $u(T_f;u_0,p)=\varphi_j$.

In practice, we shall build, by induction, controls $p_n\in L^2(\tau_{n-1},\tau_n)$ for $n\geq 1$ such that, setting 
\begin{equation}\label{def-q-v}
\begin{array}{l}
\displaystyle q_{n}(t):=\sum_{j=1}^n p_j(t)\chi_{[\tau_{j-1},\tau_j]}(t),\\
v_n:=v(\tau_n;v_0,0,q_n),
\end{array}
\end{equation}
it holds that
\begin{equation}\label{iterative-stepp}
\begin{array}{ll}
1.&\norm{p_{n}}_{L^2(\tau_{n-1},\tau_n)}\leq N(T_n)\norm{v_{n-1}},\\
2.& y(\tau_{n};v_{n-1},\tau_{n-1},p_n)=0,\\
3.&\norm{v(\tau_n;v_{n-1},\tau_{n-1},p_n)}\leq  e^{\left(\sum_{j=1}^n 2^{n-j}j^2-2^n6\right)\Gamma_0/T_1},\\
4.&\norm{v(\tau_n;v_{n-1},\tau_{n-1},p_n)}\leq \prod_{j=1}^{n}K(T_j)^{2^{n-j}}\norm{v_0}^{2^{n}}.
\end{array}
\end{equation}
Observe that, by construction, 
\begin{equation*}
v_{n}=v(\tau_n;v_0,0,q_n)=v(\tau_n;v_{n-1},\tau_{n-1},p_n),\quad\forall\,n\geq1.
\end{equation*}

\subsubsection{First iteration}
Let us start by studying control problem \eqref{vv} in the first time interval $[s_0,s_1]=[\tau_0,\tau_1]=[0,T_1]$. Recalling that $\{A,B\}$ is $j$-null controllable in any time, given $v_0\in X$ there exists a control $p_1\in L^2(0,T_1)$ such that
\begin{equation}\label{p_0}
\norm{p_1}_{L^2(0,T_1)}\leq N(T_1)\norm{v_0},\quad\text{and}\quad y(T_1;v_0,0,p_1)=0,
\end{equation}
where $N(T_1)$ is the control cost and $y(\cdot;v_0,0,p_1)$ is the solution of the linear problem \eqref{newlin}. So, the first two items of \eqref{iterative-stepp} for $n=1$ are fulfilled. We now apply Proposition \ref{prop38} deducing that
\begin{equation}
\sup_{t\in[0,T_1]}\norm{v(t;v_0,0,p_1)}^2\leq C_1(T_1,\norm{v_0})\norm{v_0}^2,
\end{equation}
where $C_1(T_1,\norm{v_0})=e^{(2\sigma+\norm{B})T_1+2\norm{B}N(T_1)\sqrt{T_1}\norm{v_0}}(1+\norm{B}N(T_1)^2)$.

We measure how close from $0$ the solution of \eqref{vv} is steered at time $T_1$ by control $p_1$. For this purpose, we introduce the function $w(\cdot):=v(\cdot;v_0,0,p_1)-y(\cdot;v_0,0,p_1)$ which satisfies the following Cauchy problem
\begin{equation}\label{ww}
\left\{\begin{array}{ll}
w'(t)+Aw(t)+p_1(t)Bv(t)=0,&t\in[0,T_1]\\\\
w(0)=0.
\end{array}\right.
\end{equation}
Thanks to Proposition \ref{prop39}, if
\begin{equation}\label{v00}
N(T_1)\norm{v_0}\leq 1,
\end{equation}
then, the solution of \eqref{ww} satisfies
\begin{equation}\label{wTT}
\norm{w(T_1;0,p_1)}=\norm{v(T_1;v_0,0,p_1)}\leq K(T_1)\norm{v_0}^2,
\end{equation}
where $K(\cdot)$ is defined on $(0,\infty)$ as
\begin{equation}\label{KTT}
K^2(\tau):=\norm{B}^2N(\tau)^2e^{(4\sigma+\norm{B}+1)\tau+2\norm{B}\sqrt{\tau}}\left(1+\norm{B}N(\tau)^2\right).
\end{equation}
Notice that, the first equality in \eqref{wTT} holds true because control $p_1$ steers to $0$ the solution of the linear problem (see \eqref{p_0}). 

\begin{oss}\label{oss32}
Observe that function $K(\cdot)$ satisfies
$$K^2(\tau)\leq \norm{B}^2N^2(\tau)e^{(4\sigma+3\norm{B}+1)}\left(1+\norm{B}N^2(\tau)\right),\quad \forall\,0<\tau\leq1.$$
Therefore, since $T_1=\min\{6T/\pi^2, 1,T_0\}$, combining the above inequality with \eqref{bound-control-costt}, we deduce that there exists a constant $\Gamma_0>\nu$ such that
\begin{equation}\label{estK}
K(\tau)\leq e^{\Gamma_0/\tau},\quad \forall\,0<\tau\leq T_1.
\end{equation}
where $T_0$ is defined in \eqref{bound-control-costt}. 

Note that a suitable choice of constant $\Gamma_0$ such that \eqref{estK} holds is \eqref{Gamma_0}.
\end{oss}
We now define the radius of the neighborhood of $\varphi_j$ where we take the initial condition $u_0$ as in \eqref{RT}. Let $u_0\in B_{R_T}(\varphi_j)$, or equivalently $v_0=u_0 - \varphi_j \in B_{R_T}(0)$, be chosen arbitrarily. With this choice we have that
\begin{equation*}
N(T_1)\norm{v_0}\leq e^{\nu/T_1}e^{-6\Gamma_0/T_1}\leq e^{-5\Gamma_0/T_1}\leq1,
\end{equation*}
and \eqref{v00} is satisfied. Therefore, we get that
\begin{equation}\label{first-iteration-induction-for-v_n}
\norm{v(T_1;v_0,0,p_1)}\leq K(T_1)\norm{v_0}^2\leq e^{-11\Gamma_0/T_1},
\end{equation}
which proves 3. and 4. of \eqref{iterative-stepp} for $n=1$.

\subsubsection{Iterative step}
Now, suppose that we have built controls $p_j\in L^2(\tau_{j-1},\tau_j)$ such that \eqref{iterative-stepp} holds for each $j=1,\dots,n-1$. In particular, for $j=n-1$, there exists $p_{n-1}\in L^2(\tau_{n-1},\tau_n)$ which verifies
\begin{equation}\label{inductive-hyp-iter-step}
\begin{array}{ll}
1.&\norm{p_{n-1}}_{L^2(\tau_{n-2},\tau_{n-1})}\leq N(T_{n-1})\norm{v_{n-2}},\\
2.& y(\tau_{n-1};v_{n-2},\tau_{n-2},p_{n-1})=0,\\
3.&\norm{v(\tau_{n-1};v_{n-2},\tau_{n-2},p_{n-1})}\leq  e^{\left(\sum_{j=1}^{n-1} 2^{n-1-j}j^2-2^{n-1}6\right)\Gamma_0/T_1},\\
4.&\norm{v(\tau_{n-1};v_{n-2},\tau_{n-2},p_{n-1})}\leq \prod_{j=1}^{n-1}K(T_j)^{2^{n-1-j}}\norm{v_0}^{2^{n-1}}.
\end{array}
\end{equation}
We shall now prove that there exists $p_n\in L^2(\tau_{n-1},\tau_n)$ such that every item of \eqref{iterative-stepp} is fulfilled. We defined $q_{n-1}$ and $v_{n-1}$ as in \eqref{def-q-v} and we consider the following problem 
\begin{equation}\label{v-iterative-step}
\begin{cases}
v'(t)+Av(t)+p(t)Bv(t)+p(t)B\varphi_j=0,&[\tau_{n-1},\tau_n]\\
v(\tau_{n-1})=v_{n-1},
\end{cases}
\end{equation}
where the control $p$ has still to be suitably chosen. By the change of variables $s=t-\tau_{n-1}$ and the definition \eqref{taun}, we shift the problem from $[\tau_{n-1},\tau_n]$ into the interval $[0,T_n]$. We introduce the functions $\tilde{v}(s)=v(s+\tau_{n-1})$ and $\tilde{p}(s)=p\left(s+\tau_{n-1}\right)$ and we rewrite \eqref{v-iterative-step} as
\begin{equation}\label{tildevtn-1tn}
\left\{\begin{array}{ll}
\tilde{v}'(s)+A\tilde{v}(s)+\tilde{p}(s)B\tilde{v}(s)+\tilde{p}(s)B\varphi_j=0,&s\in \left[0,T_n\right]\\\\
\tilde{v}(0)=v_{n-1}.
\end{array}
\right.
\end{equation}
Recalling that $\{A,B\}$ is $j$-null controllable in any time, there exists a control $\tilde{p}_n\in L^2(0,T_n)$ such that
\begin{equation*}
\norm{\tilde{p}_n}_{L^2(0,T_n)}\leq N(T_n)\norm{v_{n-1}}\quad\text{and}\quad \tilde{y}(T_n,v_{n-1},0,\tilde{p}_n)=0,
\end{equation*}
where $\tilde{y}(\cdot;v_{n-1},0,\tilde{p}_n)$ is the solution of the linear problem \eqref{newlin} on $[0,T_n]$. Furthermore, since $v_{n-1}=v(\tau_{n-1};v_0,0,q_{n-1})=v(\tau_{n-1};v_{n-2},\tau_{n-2},p_{n-1})$, from 3. of \eqref{inductive-hyp-iter-step} we obtain that
\begin{equation}\label{induc}
\begin{split}
N(T_n)\norm{v_{n-1}}&\leq e^{\nu n^2/T_1}e^{\left(\sum_{j=1}^{n-1} 2^{n-1-j}j^2-2^{n-1}6\right)\Gamma_0/T_1}\\
&\leq e^{(n^2+(-(n-1)^2-4(n-1)+2^{n-1}6-6-2^{n-1}6)\Gamma_0/T_1}\\
&=e^{-(2n+3)\Gamma_0/T_1}\leq1,
\end{split}
\end{equation}
where we have used that the constant of the control cost $\nu$ is less than or equal to $\Gamma_0$ (see Remark \ref{oss32}), and the identity
\begin{equation}
\sum_{j=0}^n\frac{j^2}{2^j}=2^{-n}(-n^2-4n+6(2^n-1)), \qquad n\geq0,
\end{equation}
which can be easily checked by induction.

We now choose the control $\tilde{p}=\tilde{p}_n$ in \eqref{tildevtn-1tn} and still denote by $\tilde{v}$ the corresponding solution. We set $w=\tilde{v}-\tilde{y}$. Then, $w$ solves \eqref{w} with $T=T_n$ and $p=\tilde{p}_n$. So, we can apply Proposition \ref{prop39} with $T=T_n$ to problem \eqref{tildevtn-1tn} and since $w(T_n;0,\tilde{p}_n)=\tilde{v}(T_n;v_{n-1},0,\tilde{p}_n)$, we obtain that
\begin{equation*}
\norm{\tilde{v}(T_n;v_{n-1},0,\tilde{p}_n)}\leq K(T_n)\norm{v_{n-1}}^2.
\end{equation*}
We shift back the problem into the original interval $\left[\tau_{n-1},\tau_{n}\right]$, we define $p_n(t):=\tilde{p}_n( t-\tau_{n-1})$, and we get
\begin{equation*}
\norm{p_n}_{L^2(\tau_{n-1},\tau_n)}\leq N(T_n)\norm{v_{n-1}},\quad\text{and}\quad y(\tau_n,v_{n-1},\tau_{n-1},p_n)=0,
\end{equation*}
and
\begin{equation}\label{iterative-step-bound-v-taun}
\norm{v(\tau_{n};v_{n-1},\tau_{n-1},p_n)}\leq K(T_n)\norm{v_{n-1}}^2.
\end{equation}
So, we have proved the first two items of \eqref{iterative-stepp}. Moreover, thanks to 3. of \eqref{inductive-hyp-iter-step}, we deduce that
\begin{equation}
\norm{v(\tau_{n};v_{n-1},\tau_{n-1},p_n)}\leq e^{\Gamma_0 n^2/T_1}\left[e^{\left(\sum_{j=1}^{n-1} 2^{n-1-j}j^2-2^{n-1}6\right)\Gamma_0/T_1}\right]^2=e^{\left(\sum_{j=1}^n 2^{n-j}j^2-2^n6\right)\Gamma_0/T_1},
\end{equation}
that is the third item of \eqref{iterative-stepp}. Finally, using again \eqref{iterative-step-bound-v-taun} and 4. of \eqref{inductive-hyp-iter-step} we obtain that
\begin{equation*}
\norm{v(\tau_{n};v_{n-1},\tau_{n-1},p_n)}\leq K(T_n)\left[\prod_{j=1}^{n-1}K(T_j)^{2^{n-1-j}}\norm{v_0}^{2^{n-1}}\right]^2=\prod_{j=1}^{n}K(T_j)^{2^{n-j}}\norm{v_0}^{2^{n}}.
\end{equation*}
This concludes the induction argument and the proof of \eqref{iterative-stepp}.

We are now ready to complete the proof of Theorem \ref{teo1} for the case $\lambda_j=0$. We observe that for all $n\in\NN^*$
\begin{equation}\label{final}
\begin{split}
\norm{v(\tau_{n};v_{n-1},\tau_{n-1},p_n)}&\leq\prod_{j=1}^nK(T_j)^{2^{n-j}}\norm{v_0}^{2^n}\\
&\leq \prod_{j=1}^n\left(e^{\Gamma_0 j^2/T_1}\right)^{2^{n-j}}\norm{v_0}^{2^n}=e^{\Gamma_0 2^n/T_1\sum_{j=1}^nj^2/2^j}\norm{v_0}^{2^n}\\
&\leq e^{\Gamma_0 2^n/T_1\sum_{j=1}^\infty j^2/2^j}\norm{v_0}^{2^n}\leq \left(e^{6\Gamma_0/T_1}\norm{v_0}\right)^{2^n}
\end{split}
\end{equation}
where we have used that $\sum_{j=1}^\infty j^2/2^j=6$. Notice that \eqref{final} is equivalent to
\begin{equation}\label{final2}
\norm{v(\tau_{n};v_0,0,q_n)}\leq \left(e^{6\Gamma_0/T_1}\norm{v_0}\right)^{2^n},
\end{equation}
where $q_n(t)=\sum_{j=1}^{n}p_j(t)\chi_{[\tau_{j-1},\tau_j]}(t)$. We now take the limit as $n\to \infty$ in \eqref{final2} and we get
\begin{equation}
\norm{u\left(\frac{\pi^2}{6}T_1;u_0,q_{\infty}\right)-\varphi_j}=\norm{v\left(\frac{\pi^2}{6}T_1;v_0,0,q_{\infty}\right)}=\norm{v(T_f;v_0,0,q_{\infty})}\leq 0
\end{equation}
because $\norm{v_0}<e^{-6\Gamma_0/T_1}$. This means that, we have built a control $p\in L^2_{loc}([0,\infty))$, defined by
\begin{equation}
p(t)=\left\{\begin{array}{ll}
\sum_{n=1}^\infty p_{n}(t)\chi_{\left[\tau_{n-1} ,\tau_{n}\right]}(t),& t\in \left(0,T_f\right]\\\\
0,&t\in(T_f,+\infty)
\end{array}\right.
\end{equation}
for which the solution $u$ of \eqref{sys} reaches the $j$th eigensolution $\psi_j=\varphi_j$ in time $T_f$, less than or equal to $T$, and stays on it forever.

Observe that, thanks to the first item of \eqref{iterative-stepp} and to \eqref{induc}, we are able to yield a bound for the $L^2$-norm of such a control:
\begin{equation}\label{pestimate}
\begin{split}
\norm{p}^2_{L^2\left(0,T\right)}&=\sum_{n=1}^\infty \norm{p_{n}}^2_{L^2\left(\tau_{n-1},\tau_{n}\right)}\leq \sum_{n=1}^\infty \left(N(T_n)\norm{v\left(\tau_{n-1} \right)}\right)^2\leq \sum_{n=1}^\infty e^{-2(2n+3)C_K/T_1}\\
&\leq\frac{e^{-6C_K/T_1}}{e^{4C_K/T_1}-1}=\frac{e^{-\pi^2C_K/T_f}}{e^{2\pi^2C_K/(3T_f)}-1}.
\end{split}
\end{equation}
Notice that since \eqref{T_f} holds, \eqref{pestimate} implies \eqref{intro-estim-p}.

\subsection{Case $\lambda_j\neq0$}
Now, we face the case $\lambda_j\neq0$. We define the operator
\begin{equation*}
A_j:=A-\lambda_jI.
\end{equation*}
We proved in \cite[Lemma 4.7]{acu} that if $\{A,B\}$ is $j$-null controllable, then the same holds for the pair $\{A_j,B\}$. Furthermore, it is easy to check that also condition \eqref{bound-control-costt} is verified by the control cost associated with $\{A_j,B\}$, if the same property holds for the control cost associated with the pair $\{A,B\}$.

It is possible to check that $A_j$ satisfies \eqref{ipAAA} and moreover it has the same eigenfuctions, $\{\varphi_k\}_{k\in\NN^*}$, of $A$, while the eigenvalues are given by
\begin{equation*}
\mu_k=\lambda_k-\lambda_j, \qquad\forall\, k\in\NN^*.
\end{equation*}
In particular, $\mu_j=0$.

We define the function $z(t)=e^{\lambda_j t}u(t)$, where $u$ is the solution of \eqref{sys}. Then, $z$ solves the following problem
\begin{equation}\label{z}
\left\{\begin{array}{ll}
z'(t)+A_jz(t)+p(t)Bz(t)=0,&[0,T],\\\\
z(0)=u_0.
\end{array}\right.
\end{equation}
We define $T_f$ as in \eqref{T_f} (where $T_0$ is now the constant associated with the control cost relative to the pair $\{A_j,B\}$) and $R_{T}$ as in \eqref{RT}. We deduce from the previous analysis that, if $u_0\in B_{R_{T}}(\varphi_j)$, then there exists a control $p\in L^2([0,+\infty))$ that steers the solution $z$ to the eigenstate $\varphi_j$ in time $T_f\leq T$. This implies the exact controllability of $u$ to the eigensolution $\psi_j(t)=e^{-\lambda_j t}\varphi_j$: indeed,
\begin{equation*}
\norm{u\left(T_f;u_0,p\right)-\psi_j\left(T_f\right)}=\norm{e^{-\lambda_jT_f}z\left(T_f\right)-e^{-\lambda_jT_f}\varphi_j}=e^{-\lambda_jT_f}\norm{z\left(T_f\right)-\varphi_j}=0.
\end{equation*}

\begin{oss}
We observe that, from \eqref{pestimate}, it follows that $\norm{p}_{L^2(0,T_{f})}\to 0$ as $T_f\to 0$. This fact is not surprising since as $T_f$ approaches $0$, also the size of the neighborhood where the initial condition can be chosen goes to zero.
\end{oss}

\section{Proof of Theorem \ref{Thm-suff-cond}}
Before showing the proof of Theorem \ref{Thm-suff-cond}, we define formally the following function
\begin{equation}\label{G}
G_M(T):=\frac{M}{T^4}e^{M/T}\sum_{k=1}^{\infty}\frac{e^{-2\omega_kT+M\sqrt{\omega_k}}}{|\langle B\varphi_j,\varphi_k\rangle|^2},
\end{equation}
where $M$ is a positive constant, $\omega_k:=\lambda_k-\lambda_1$, for all $k\in\NN^*$, $\{\lambda_k\}_{k\in\NN^*}$ are the eigenvalues of $A$. In Lemma \ref{lem1} below, we investigate the behavior of $G_M(T)$ for small values of $T$. Such a result will be crucial for the analysis of the control cost $N(T)$ in Theorem \ref{Thm-suff-cond}.

\begin{lem}\label{lem1}
Let $A:D(A)\subset X\to X$ be such that \eqref{ipA} and \eqref{gap} hold and $B:X\to X$ be such that \eqref{ipB} holds. Then, for any $M,T>0$ the series in \eqref{G} is convergent and there exists a positive constant $\Gamma_j$, such that
\begin{equation} 
G_M(T)\leq e^{2\Gamma_j/T},\quad\forall\,0<T\leq 1.
\end{equation}
Moreover, a suitable choice of $\Gamma_j=\Gamma_j(M,b,q, \alpha)$ is \eqref{Gamma_jFatiha}.
\end{lem}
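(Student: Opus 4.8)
The plan is to estimate the series in \eqref{G} by separating a diagonal contribution ($k=j$) from an off-diagonal one ($k\neq j$), and, in each summand, to factor out a $T$-dependent ``peak factor'' that will match the leading exponential $e^{(M+M^2/4)/T}$ hidden in $e^{2\Gamma_j/T}$. First I would isolate the term $k=j$: since $u\mapsto -2u^2T+Mu$ attains its maximum $M^2/(8T)$ at $u=M/(4T)$, we have $e^{-2\omega_kT+M\sqrt{\omega_k}}\le e^{M^2/(8T)}$, and because $\langle B\varphi_j,\varphi_j\rangle\neq0$ by \eqref{ipB}, the diagonal term is bounded by $e^{M^2/(8T)}/|\langle B\varphi_j,\varphi_j\rangle|^2$.

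For $k\neq j$, assumption \eqref{ipB} gives $|\langle B\varphi_j,\varphi_k\rangle|^{-2}\le \omega_k^{2q}/b^2$. I would then split the exponent as $-2\omega_kT+M\sqrt{\omega_k}=-\omega_kT+(-\omega_kT+M\sqrt{\omega_k})$ and bound the parenthesis by $M^2/(4T)$ (the maximum of $u\mapsto-u^2T+Mu$), keeping the genuinely decaying factor $e^{-\omega_kT}$ next to the polynomial weight $\omega_k^{2q}$. Summing over all $k$ (which only increases the bound, and whose $k=1$ node contributes $0$ since $\omega_1=0$) reduces the off-diagonal part to at most $\frac{e^{M^2/(4T)}}{b^2}\,\Sigma(T)$, where $\Sigma(T):=\sum_k\omega_k^{2q}e^{-\omega_kT}$.

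The heart of the argument is the estimate of $\Sigma(T)$. The function $h(\omega)=\omega^{2q}e^{-\omega T}$ is unimodal on $(0,\infty)$ with $\sup_\omega h=(2q/(eT))^{2q}=C_q/(2T^{2q})$. From \eqref{gap} with $k=1$ one gets $\sqrt{\omega_2}=\sqrt{\lambda_2-\lambda_1}\ge\alpha$, and for every $k\ge1$
\[
\omega_{k+1}-\omega_k=(\sqrt{\omega_{k+1}}-\sqrt{\omega_k})(\sqrt{\omega_{k+1}}+\sqrt{\omega_k})\ge \alpha\bigl(\sqrt{\omega_2}+\sqrt{\omega_1}\bigr)=\alpha\sqrt{\lambda_2-\lambda_1},
\]
so the nodes $\omega_k$ are spaced by at least $\alpha\sqrt{\lambda_2-\lambda_1}$ in the $\omega$-variable. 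Comparing the sum of a unimodal function at such spaced nodes to its integral (the terms left of the peak are each dominated by the integral over the following gap, those right of the peak by the integral over the preceding gap, and the maximal term by $\sup_\omega h$) yields $\Sigma(T)\le \sup_\omega h+\frac{1}{\alpha\sqrt{\lambda_2-\lambda_1}}\int_0^\infty h\,d\omega=\frac{C_q}{2T^{2q}}+\frac{C_{q,\alpha}}{2T^{2q+1}}$, where $\int_0^\infty\omega^{2q}e^{-\omega T}d\omega=\Gamma(2q+1)/T^{2q+1}$ produces the Gamma function in $C_{q,\alpha}$. Setting up this sum–integral comparison with the correct gap constant, and choosing the exponent split so that exactly one factor stays summable, is the step I expect to demand the most care.

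Finally I would reassemble, using $e^{M^2/(8T)}\le e^{M^2/(4T)}$, to get
\[
G_M(T)\le \frac{M}{T^4}e^{(M+M^2/4)/T}\left[\frac{1}{|\langle B\varphi_j,\varphi_j\rangle|^2}+\frac{C_q}{2b^2T^{2q}}+\frac{C_{q,\alpha}}{2b^2T^{2q+1}}\right],
\]
and it then suffices to show each of the three terms is at most $\tfrac13 e^{2\Gamma_j/T}$ for $0<T\le1$. The summand $M+M^2/4$ of $2\Gamma_j$ absorbs the leading exponential $e^{(M+M^2/4)/T}$; the elementary inequality $-\ln T\le 1/(eT)$ on $(0,1]$ converts the worst prefactor $T^{-(2q+5)}$ (from the third term) into an exponential dominated by the $(2q+5)e$ summand of $2\Gamma_j$; and the $\max\{\dots,0\}$ in \eqref{Gamma_jFatiha} absorbs the three constants $\ln(3M/|\langle B\varphi_j,\varphi_j\rangle|^2)$, $\ln(3MC_q/b^2)$, $\ln(3MC_{q,\alpha}/b^2)$. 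Adding the three bounds gives $G_M(T)\le e^{2\Gamma_j/T}$; convergence of the series for every $T>0$ follows a fortiori from the finiteness of $\Sigma(T)$.
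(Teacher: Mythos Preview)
Your proof is correct and follows essentially the same route as the paper's: isolate the $k=j$ term, use \eqref{ipB} to insert the weight $\omega_k^{2q}$, split the exponent so as to extract the peak factor $e^{M^2/(4T)}$, and control $\sum_k \omega_k^{2q}e^{-\omega_kT}$ by a unimodal sum--integral comparison using the gap bound $\omega_{k+1}-\omega_k\ge\alpha\sqrt{\lambda_2-\lambda_1}$ together with $\int_0^\infty\omega^{2q}e^{-\omega T}\,d\omega=\Gamma(2q+1)/T^{2q+1}$. The only differences are cosmetic: the paper bounds two ``peak'' terms $k_1,k_1+1$ by $\sup g$ and overcounts the integral by a factor of two (yielding $C_q/T^{2q}+C_{q,\alpha}/T^{2q+1}$ rather than half of that), and it leaves the final verification that \eqref{Gamma_jFatiha} suffices implicit, whereas you spell out the $-\ln T\le 1/(eT)$ step and the role of each summand in $2\Gamma_j$.
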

\begin{proof}
Thanks to assumption \eqref{ipB}, we have that
\begin{equation}
\begin{split}
G_M(T)&=\frac{M}{T^4}e^{M/T}\sum_{k=1}^{\infty}\frac{e^{-2\omega_kT+M\sqrt{\omega_k}}}{|\langle B\varphi_j,\varphi_k\rangle|^2}\\
&\leq \frac{M}{T^4}e^{M/T}\left[\frac{e^{M^2/(8T)}}{|\langle B\varphi_j,\varphi_j\rangle|^2}+\frac{1}{b^2}\sum_{k\neq j,\,k=1}^{\infty}\left(\omega_k^{2q}e^{-\omega_kT}\right)e^{-\omega_kT+M\sqrt{\omega_k}}\right].
\end{split}
\end{equation}
For any $\omega\geq0$ we set $f(\omega)=e^{-\omega T+M\sqrt{\omega}}$. The maximum value of $f$ is attained at $\omega=\left(\frac{M}{2T}\right)^2$. So, we can bound $G_M(T)$ as follows
\begin{equation}\label{gamma4}
G_M(T)\leq\frac{M}{T^4}e^{M/T}\left[\frac{e^{M^2/(8T)}}{|\langle B\varphi_j,\varphi_j\rangle|^2}+\frac{e^{M^2/(4T)}}{b^2}\sum_{k=1}^{\infty}\omega_k^{2q}e^{-\omega_kT}\right].
\end{equation}

Now, for any $\omega\geq0$ we define the function $g(\omega)=\omega^{2q}e^{-\omega T}$. Its derivative is given by
\begin{equation*}
g'(\omega)=(2q-\omega T)\omega^{2q-1}e^{-\omega T}
\end{equation*}
and therefore we deduce that
\begin{equation*}
g(\omega)\mbox{ is }\left\{\begin{array}{ll}\mbox{increasing } & \mbox{if }0\leq \omega<(2q)/T\\\\
\mbox{decreasing}&\mbox{if } \omega\geq (2q)/T  \end{array}\right.
\end{equation*}
and $g$ has a maximum at $\omega=(2q)/T$. We define the following index:
\begin{equation*}
k_1:=k_1(T)=\sup\left\{k\in\NN^*\,:\,\omega_k\leq \frac{2q}{T}\right\}
\end{equation*}
Note that $k_1(T)$ goes to $\infty$ as $T$ converges to $0$.
We can rewrite the sum in \eqref{gamma4} as follows
\begin{equation}\label{s123}
\sum_{k=1}^{\infty}\omega_k^{2q}e^{-\omega_kT}=\sum_{k\leq k_1-1}\omega_k^{2q}e^{-\omega_kT}+\sum_{k_1\leq k\leq k_1+1}\omega_k^{2q}e^{-\omega_kT}+\sum_{k\geq k_1+2}\omega_k^{2q}e^{-\omega_kT}.
\end{equation}

For any $k\leq k_1-1$, we have
\begin{equation}\label{s1}
\int_{\omega_k}^{\omega_{k+1}}\omega^{2q}e^{-\omega T}d\omega\geq (\omega_{k+1}-\omega_k)\omega_k^{2q}e^{-\omega_k T}\geq \alpha\sqrt{\omega_2}\,\omega_k^{2q}e^{-\omega_k T}
\end{equation}
and for any $k\geq k_1+2$
\begin{equation}\label{s3}
\int_{\omega_{k-1}}^{\omega_k}\omega^{2q}e^{-\omega T}d\omega\geq (\omega_k-\omega_{k-1})\omega_k^{2q}e^{-\omega_k T}\geq \alpha\sqrt{\omega_2}\,\omega_k^{2q}e^{-\omega_k T}.
\end{equation}
So, by using estimates \eqref{s1} and \eqref{s3}, \eqref{s123} becomes
\begin{equation}\label{s2i}
\sum_{k=1}^{\infty}\omega_k^{2q}e^{-\omega_kT}\leq\frac{2}{\alpha\sqrt{\omega_2}}\int_0^\infty \omega^{2q}e^{-\omega T}d\omega+\sum_{k_1\leq k\leq k_1+1}\omega_k^{2q}e^{-\omega_kT}.
\end{equation}
Furthermore, recalling that $g$ has a maximum for $\omega=2q/T$, it holds that
\begin{equation}\label{s2}
k=k_1,k_1+1\quad\Rightarrow\quad \omega_k^{2q}e^{-\omega_k T}\leq\left(2q/T\right)^{2q}e^{-2q}.
\end{equation}

Finally, the integral term of \eqref{s2i} can be rewritten as
\begin{equation}\label{s13}
\int_0^\infty\omega^{2q}e^{-\omega T}d\omega=\frac{1}{T}\int_0^{\infty}\left(\frac{s}{T}\right)^{2q}e^{-s}ds=\frac{1}{T^{1+2q}}\int_0^{\infty}s^{2q}e^{-s}ds=\frac{\Gamma(2q+1)}{T^{1+2q}},
\end{equation}
where by $\Gamma(\cdot)$ we indicate the Euler integral of the second kind. 

Therefore, we conclude from \eqref{s2} and \eqref{s13} that there exist two constants $C_q,C_{q,\alpha}>0$ such that 
\begin{equation}
\sum_{k=1}^\infty \omega_k^{2q}e^{-\omega_k T}\leq \frac{C_q}{T^{2q}}+\frac{C_{\alpha,q}}{T^{1+2q}}.
\end{equation}
We use this last bound to prove that there exists $\Gamma_j>0$ such that
\begin{equation}\label{ineqCM}
G_M(T)\leq\frac{M}{T^4}e^{M/T}\left[\frac{e^{M^2/(8T)}}{|\langle B\varphi_j,\varphi_j\rangle|^2}+\frac{e^{M^2/(4T)}}{b^2}\left(\frac{C_q}{T^{2q}}+\frac{C_{\alpha,q}}{T^{1+2q}}\right)\right]\leq e^{2\Gamma_j/T} \quad \forall \ T \in (0,1],
\end{equation}
as claimed.
\end{proof}
Now we proceed with the proof of Theorem \ref{Thm-suff-cond}.
\begin{proof}[Proof (of Theorem \ref{Thm-suff-cond}).]
Let $T>0$ and consider problem \eqref{newlin}. For any $y_0\in X$ and $p\in L^2(0,T)$ there exists a unique strong solution $y\in C^0([0,T],X)$ of \eqref{newlin} that can be written as
\begin{equation}
y(t)=e^{-tA}y_0-\int_0^t e^{-(t-s)A}p(s)B\varphi_jds,
\end{equation}
(see, for instance, \cite[Proposition 3.1, p. 130]{bd}).

Our aim is to find a control $p\in L^2(0,T)$ for which $y(T;y_0,0,p)=0$, that is equivalent to the following identity
\begin{equation*}
\sum_{k\in\NN^*}\langle y_0,\varphi_k\rangle e^{-\lambda_k T}\varphi_k=\int_0^T p(s)\sum_{k\in\NN^*}\langle B\varphi_j,\varphi_k\rangle e^{-\lambda_k(T-s)}\varphi_kds.
\end{equation*}
Since, by hypothesis, the eigenfunctions of $A$ form an orthonormal basis of $X$, the above formula reads as 
\begin{equation*}
\langle y_0,\varphi_k\rangle=\int_0^T e^{\lambda_ks}p(s)\langle B\varphi_j,\varphi_k\rangle ds,\quad\forall\,k\in\NN^*,
\end{equation*}
or, equivalently,
\begin{equation}\label{momp}
\int_0^T e^{\lambda_ks}p(s)ds=\frac{\langle y_0,\varphi_k\rangle}{\langle B\varphi_j,\varphi_k\rangle},\quad\forall\,k\in\NN^*.
\end{equation}
By defining $q(s):=e^{\lambda_1 s}p(s)$ and $\omega_k:=\lambda_k-\lambda_1\geq0$, the family of equations \eqref{momp} can be rewritten as
\begin{equation}\label{momp-q}
\int_0^T e^{\omega_k s}q(s)ds=\frac{\langle y_0,\varphi_k\rangle}{\langle B\varphi_j,\varphi_k\rangle},\quad\forall\,k\in\NN^*.
\end{equation}
Thanks to hypothesis \eqref{gap}, we can apply \cite[Theorem 2.4]{cmv} that ensures the existence of a biorthogonal family $\{\sigma_k\}_{k\in\NN^*}$ to the family of exponentials $\{\zeta_k\}_{k\in\NN^*}$, $\zeta_k(s)=e^{\omega_ks}$, $s\in[0,T]$.

We claim that the series
\begin{equation}\label{p}
\sum_{k\in\NN^*}\frac{\langle y_0,\varphi_k\rangle}{\langle B\varphi_j,\varphi_k\rangle}\sigma_k(s),
\end{equation}
is convergent in $L^2(0,T)$. Indeed, thanks to the following estimate, from \cite[Theorem 2.4]{cmv}, for the biorthogonal family $\{\sigma_k\}_{k\in\NN^*}$
\begin{equation*}
\norm{\sigma_k}^2_{L^2(0,T)}\leq C_\alpha(T) e^{-2\omega_kT}e^{C \sqrt{\omega_k}/\alpha},\quad\forall\,k\in\NN^*,
\end{equation*}
with $C>0$ independent of $T$ and $\alpha$, and 
\begin{equation*}
C_\alpha(T)=\left\{\begin{array}{ll}
C\left(\frac{1}{T}+\frac{1}{T^2\alpha^2}\right)e^{\frac{C}{\alpha^2 T}}&\text{if }T<\frac{1}{\alpha^2},\\\\
C^2\alpha^2&\text{if }T\geq\frac{1}{\alpha^2},
\end{array}\right.
\end{equation*}
we obtain 
\begin{equation*}
\begin{split}
\sum_{k\in\NN^*}\left|\frac{\langle y_0,\varphi_k\rangle}{\langle B\varphi_j,\varphi_k\rangle}\right|\norm{\sigma_k}_{L^2(0,T)}&\leq \norm{y_0}\left(\sum_{k\in\NN^*}\frac{\norm{\sigma_k}^2_{L^2(0,T)}}{|\langle B\varphi_j,\varphi_k\rangle|^2}\right)^{1/2}\\
&\leq\norm{y_0}\left( C^2_\alpha(T)\sum_{k\in\NN^*}\frac{e^{-2\omega_kT}e^{C\sqrt{\omega_k}/\alpha}}{|\langle B\varphi_j,\varphi_k\rangle|^2})\right)^{1/2}.
\end{split}
\end{equation*}
Observe that, by Lemma \ref{lem1}, the right-hand side of the above estimate is finite for any $T>0$.

Therefore, we define the control $q$ as
\begin{equation*}
q(s):=\sum_{k\in\NN^*}\frac{\langle y_0,\varphi_k\rangle}{\langle B\varphi_j,\varphi_k\rangle}\sigma_k(s),
\end{equation*}
and we deduce that $q\in L^2(0,T)$ satisfies \eqref{momp-q} and furthermore
\begin{equation*}
\norm{q}_{L^2(0,T)}\leq C_\alpha(T)\Lambda_T\norm{y_0},
\end{equation*}
where $\Lambda_T:=\left( \sum_{k\in\NN^*}\frac{e^{-2\omega_kT}e^{C\sqrt{\omega_k}/\alpha}}{|\langle B\varphi_j,\varphi_k\rangle|^2}\right)^{1/2}$.

Finally, returning to $p$, we obtain that
\begin{equation}
    \norm{p}^2_{L^2(0,T)}=\int_0^Te^{-2\lambda_1s}|q(s)|^2ds\leq \max\left\{1,e^{-2\lambda_1 T}\right\}\norm{q}^2_{L^2(0,T)}.
\end{equation}
By taking
\begin{equation}\label{def-control-cost-p}
    N(T):=\max\left\{1,e^{-\lambda_1 T}\right\}C_{\alpha}(T)\Lambda_T,
\end{equation}
we deduce that $\{A,B\}$ is $j$-null controllable in any time $T>0$ with associated control cost \eqref{def-control-cost-p}. 

What remains to prove is estimate \eqref{bound-control-cost} for the control cost $N(T)$ defined in \eqref{def-control-cost-p}, for $T$ small. 
Let us define $T_0$ as in \eqref{T_0}. Then for any $0<T< T_0$, it holds that
\begin{equation*}
C_\alpha(T)=C\left(\frac{1}{T}+\frac{1}{T^2\alpha^2}\right)e^{\frac{C}{\alpha^2 T}}.
\end{equation*}
We can assume without loss of generality that the constant $C \geq 1$, since we can replace it by $\max\left\{1, C\right\}$. We assume for all the sequel that $C \geq 1$.

Since $0<T< T_0 \leq 1$, we claim that there exists $\widetilde{M}>0$ such that
\begin{equation}\label{aimM}
C_\alpha^2(T) \leq \frac{\widetilde{M}}{T^4}e^{\widetilde{M}/T} \quad \forall \ T \in (0,T_0).
\end{equation}
Indeed, we have
\begin{equation*}
C_\alpha^2(T)\leq C^2\left(1+\frac{1}{\alpha^2}\right)^2\frac{1}{T^4}e^{\frac{2C}{\alpha^2T}} \quad \forall \ T \in (0,T_0).
\end{equation*}
We set
\begin{equation*}
\widetilde{M}:=C^2\left(1+\frac{1}{\alpha^2}\right)^2.
\end{equation*}
We note that since $C\geq 1$, we have
$$
\dfrac{2C}{\alpha^2} \leq \widetilde{M}.
$$
Hence from the two above estimates, we deduce \eqref{aimM}.
Moreover, we easily prove that
$$
\max\left\{1,e^{-\lambda_1 T}\right\} \leq e^{|\lambda_1|} \quad \forall \ T \in (0,T_0).
$$
Therefore, the control cost $N(T)$ given by \eqref{def-control-cost-p} can be bounded from above as follows
\begin{equation*}
N(T)\leq \sqrt{G_{M}(T)},
\end{equation*}
where $M$ is defined as in \eqref{M} and the function $G_M(\cdot)$ is defined in \eqref{G}. Finally, thanks to Lemma \ref{lem1}, we deduce that $N(T)$ fulfills property \eqref{bound-control-cost} with $\nu=\Gamma_j$.
\end{proof}

\section{Proof of Theorems \ref{teoglobal} and \ref{teoglobal0}}
Before proving Theorem \ref{teoglobal}, let us show a preliminary result that demonstrates the statement in the case of a strictly accretive operator.
\begin{lem}\label{lemglobal}
Let $A:D(A)\subset X\to X$ be a densely defined linear operator such that \eqref{ipA} holds with $\sigma=0$ and let $B:X\to X$ be a bounded linear operator. Let \{A,B\} be a 1-null controllable pair which satisfies \eqref{bound-control-cost}.  Furthermore, we assume $\lambda_1=0$. Then, there exists a constant $r_1>0$ such that for any $R>0$ there exists $T_{R}>0$ such that for all $v_0\in X$ that satisfy
\begin{equation}\label{ipv0}
\left|\langle v_0,\varphi_1\rangle\right| <  r_1,\qquad\norm{v_0-\langle v_0,\varphi_1\rangle\varphi_1}\leq R,
\end{equation}
problem \eqref{v} is null controllable in time $T_{R}$.
\end{lem}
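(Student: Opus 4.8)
The plan is to split the control process into two consecutive phases: a \emph{free relaxation} phase, in which we run the dynamics with $p\equiv0$ so that the dissipativity of $A$ damps the part of $v$ orthogonal to $\varphi_1$, followed by a \emph{local null controllability} phase, in which we invoke Theorem~\ref{teo1} once the state has entered a small ball around the origin. Since the lemma assumes $\lambda_j=\lambda_1=0$, one checks (as in the case $\lambda_j=0$ of Theorem~\ref{teo1}) that problem \eqref{v} with $p\equiv0$ reduces to $v'+Av=0$, i.e. $v(t)=e^{-tA}v_0$, and that local null controllability of \eqref{v} from a ball $B_{R}(0)$ is precisely what Theorem~\ref{teo1} provides for $j=1$.

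First I would record the crucial dissipation estimate. Because $\{A,B\}$ is $1$-null controllable, the eigenvalues of $A$ must be pairwise distinct (otherwise the moment problem \eqref{momp} could not be solved for arbitrary data); combined with $\sigma=0$ and $\lambda_1=0$ this yields $\lambda_2>\lambda_1=0$. Hence the free semigroup freezes the $\varphi_1$-direction, $e^{-tA}\varphi_1=\varphi_1$, while it strictly contracts on $\varphi_1^{\perp}=\overline{\mathrm{span}}\{\varphi_k:k\geq2\}$, namely $\norm{e^{-tA}x}\leq e^{-\lambda_2 t}\norm{x}$ for all $x\in\varphi_1^{\perp}$.

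Next I would fix once and for all a reference time $T'>0$ and let $R_{T'}>0$ be the radius furnished by Theorem~\ref{teo1}, so that \eqref{v} is null controllable in time $T'$ from $B_{R_{T'}}(0)$. I set $r_1:=R_{T'}/2$, independently of $R$, and for given $R>0$ I choose the relaxation time
\begin{equation*}
\tau:=\max\left\{0,\;\frac{1}{\lambda_2}\ln\left(\frac{2R}{R_{T'}}\right)\right\},\qquad T_R:=\tau+T'.
\end{equation*}
Running the free dynamics on $[0,\tau]$ gives $v(\tau)=e^{-\tau A}v_0$; decomposing $v_0=\langle v_0,\varphi_1\rangle\varphi_1+v_0^{\perp}$ with $v_0^{\perp}:=v_0-\langle v_0,\varphi_1\rangle\varphi_1$, and using \eqref{ipv0} together with the contraction estimate and the choice of $\tau$, I obtain
\begin{equation*}
\norm{v(\tau)}\leq\left|\langle v_0,\varphi_1\rangle\right|+e^{-\lambda_2\tau}\norm{v_0^{\perp}}<r_1+e^{-\lambda_2\tau}R\leq\frac{R_{T'}}{2}+\frac{R_{T'}}{2}=R_{T'}.
\end{equation*}
Thus $v(\tau)\in B_{R_{T'}}(0)$, and applying Theorem~\ref{teo1} on $[\tau,\tau+T']$ (after translating time by $\tau$) produces a control steering $v$ to $0$; concatenating it with the zero control on $[0,\tau]$ yields null controllability of \eqref{v} in time $T_R$.

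The main obstacle, and the structural reason why $T_R$ must depend on $R$, is that the free flow damps only the orthogonal component while leaving $\langle v,\varphi_1\rangle$ invariant (precisely because $\lambda_1=0$). Consequently the smallness of the $\varphi_1$-component cannot be produced by relaxation and must be imposed a priori through the threshold $r_1$, chosen independently of $R$; by contrast the orthogonal part, however large up to $R$, can always be brought below $R_{T'}/2$ by waiting a time $\tau$ growing only logarithmically in $R$. The two points demanding care are therefore the deduction of $\lambda_2>0$ from $1$-null controllability, which guarantees genuine decay on $\varphi_1^{\perp}$, and the quantitative matching of $\tau$ with the radius $R_{T'}$ coming from Theorem~\ref{teo1}.
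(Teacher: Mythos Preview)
Your proposal is correct and follows essentially the same two-phase strategy as the paper: a free-evolution phase with $p\equiv0$ to damp the $\varphi_1^\perp$-component (using $\lambda_2>0$), followed by an application of Theorem~\ref{teo1} once the state lies in the small ball provided by local controllability. The only cosmetic differences are that the paper fixes the reference time $T'=1$ and uses the orthogonal identity $\norm{v(t)}^2=|\langle v_0,\varphi_1\rangle|^2+\norm{e^{-tA}v_0^\perp}^2$ rather than the triangle inequality, leading to $r_1=R_{T'}/\sqrt{2}$ instead of your $r_1=R_{T'}/2$; your additional remark justifying $\lambda_2>0$ via the moment problem is a welcome clarification that the paper leaves implicit.
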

\begin{proof}
{\bf First step.} We fix $T=1$. Thanks to Theorem \ref{teo1}, there exists a constant $r_1>0$ such that, denoting by $u_1$ the solution of  \eqref{u} on $[0,1]$, if $\norm{u_1(0)- \varphi_1}< \sqrt{2}r_1$ then there exists a control $p_1\in L^2(0,1)$ for which the solution of \eqref{u} of with $p$ replaced by $p_1$, satisfies $u_1(1)=\varphi_1$. We set $v_1=u_1- \varphi_1$ on $[0,1]$. We deduce that
if $\norm{v_1(0)} < \sqrt{2}r_1$ then there exists a control $p_1\in L^2(0,1)$ for which the solution $v_1$ of \eqref{v} on $[0,1]$ with $p$ replaced by $p_1$, satisfies $v_1(1)=0$.

\smallskip\noindent
{\bf Second step.} Let $v_0\in X$ be the initial condition of \eqref{v}. We decompose $v_0$ as follows
\begin{equation*}
v_0=\langle v_0,\varphi_1\rangle\varphi_1+v_{0,1},
\end{equation*}
where $v_{0,1}\in \varphi_1^\perp$ and we suppose that $\left|\langle v_0,\varphi_1\rangle\right| < r_1$. If $R\leq r_1$, then $ \norm{v_0}^2 \leq r^2_1+R^2\leq 2r^2_1$ and we can directly apply the first step of the proof
with $T_R=1$. Otherwise, we define $t_{R}$ as
\begin{equation}\label{trR}
t_{R}:=\frac{1}{2\lambda_2}\log{\left(\frac{R^2}{r_1^2}\right)},
\end{equation} 
and in the time interval $[0,t_{R}]$ we take the control $p\equiv0$. Then, for all $t\in [0,t_{R}]$, we have that
\begin{equation*}
\norm{v(t)}^2\leq \norm{e^{-tA}\left(\langle v_0,\varphi_1\rangle\varphi_1+v_{0,1}\right)}^2\leq \left|\langle v_0,\varphi_1\rangle\right|^2+e^{-2\lambda_2 t}\norm{v_{0,1}}^2 < r_1^2+e^{-2\lambda_2 t}R^2.
\end{equation*}
In particular, for $t=t_{R}$, it holds that $\norm{v(t_{R})}^2 < 2 r^2_1$.

Now, we define $T_{R}:=t_{R}+1$ and set $v_1(0)=v(t_R)$. Thanks to the first step of the proof, there exists a control $p_1\in L^2(0,1)$, such that $v_1(1)=0$, where $v_1$ is the solution of \eqref{v} on $[0,1]$ with $p$ replaced by $p_1$.

Then $v(t)=v_1(t-t_R)$ solves \eqref{v} in the time interval $(t_{R},T_{R}]$ with the control $p_1(t-t_{R})$ that steers the solution $v$ to $0$ at $T_{R}$.
\end{proof}
\begin{proof}[Proof (of Theorem \ref{teoglobal})]
We start with the case $\lambda_1=0$. Let $u_0\in X$ satisfy \eqref{ipu0}. Set $v(t):=u(t)-\varphi_1$, then $v$ satisfies \eqref{v} and moreover $v_0:=v(0)=u_0-\varphi_1$ fulfills \eqref{ipv0}. Thus, by Lemma \ref{lemglobal}, problem \eqref{u} is exactly controllable to the first eigensolution $\psi_1
\equiv\varphi_1$ in time $T_{R}$.

Now, we consider the case $\lambda_1>0$. As in the proof of Theorem \ref{teo1}, we introduce the variable $z(t)=e^{\lambda_1t}u(t)$ that solves problem \eqref{z}. For such a system, since the first eigenvalue of $A_1$ is equal $0$, we have the exact controllability to $\varphi_1$ in time $T_{R}$. Namely $z(T_{R})=\varphi_1$, that is equivalent to the exact controllability of $u$ to $\psi_1$:

\begin{equation}
z(T_{R})=\varphi_1\
\quad\Longleftrightarrow\quad
e^{\lambda_1T_{R}}u(T_{R})=\varphi_1
\quad\Longleftrightarrow\quad
u(T_{R})=\psi_1(T_{R}).
\end{equation}
The proof is thus complete.
\end{proof}

The proof of Theorem \ref{teoglobal0} easily follows from Theorem \eqref{teoglobal}.
\begin{proof}[Proof (of Theorem \ref{teoglobal0})]
We assume \eqref{cone}. Suppose that $\gamma:=\langle u_0,\varphi_1\rangle\neq0$. We decompose $u_0$ as $u_0=\gamma\varphi_1+\zeta_1$, with $\zeta_1:=u_0-\langle u_0,\varphi_1\rangle\varphi_1\in\varphi_1^\perp$ and define $\tilde{u}(t):=u(t)/\gamma$. Hence, $\tilde{u}$ solves
\begin{equation}\label{utildeglobal}
\left\{
\begin{array}{ll}
\tilde{u}'(t)+A\tilde{u}(t)+p(t)B\tilde{u}(t)=0,& t>0\\
\tilde{u}(0)=\varphi_1+\tilde{\zeta_1},
\end{array}\right.
\end{equation}
where $\tilde{\zeta_1}:=\zeta_1/\gamma$.

We apply Theorem \ref{teoglobal} to \eqref{utildeglobal} to deduce the existence of $T_R>0$ such that $\tilde{u}(T_R)=\psi_1(T_R)$. Therefore, the solution of \eqref{u} with initial condition $u_0\in X$ that do not vanish along the direction $\varphi_1$ can be exactly controlled in time $T_R$ to the trajectory $\phi_1(\cdot)=\langle u_0,\varphi_1\rangle\psi_1(\cdot)$, where
$\phi_1$ is defined in \eqref{exactphi1}.

Note that if $u_0\in X$ satisfies both $u_0\in\varphi_1^\perp$ and \eqref{cone}, then we have trivially that $u_0\equiv 0$. We then choose $p\equiv 0$, so that the solution of \eqref{u} remains constantly equal to $\phi_1\equiv 0$.
\end{proof}

\section{Applications}
In this section we present some examples of parabolic equations for which Theorem \ref{teo1} can be applied. The hypotheses \eqref{ipA},\eqref{gap} and \eqref{ipB} have been verified in \cite{acu} and \cite{cu}, to which we refer for more details. We observe that, thanks to \cite[Remark 6.1]{acu}, since the second order operators considered in the examples are accretive ($\langle Ax,x\rangle\geq0$, for all $x\in D(A)$), it suffices to prove the following gap condition
\begin{equation}\label{gap-no-lambda1}
    \exists\,\alpha>0\,:\,\sqrt{\lambda_{k+1}}-\sqrt{\lambda_k}\geq\alpha,\quad\forall\,k\geq1,
\end{equation}
which implies \eqref{gap}.

Moreover, in the case of an accretive operator it suffices to show that there exist $b,q>0$ such that
\begin{equation}\label{ipB-accr}
\begin{array}{l}
\langle B\varphi_j,\varphi_j\rangle\neq0\quad\mbox{and}\quad\left|\lambda_k-\lambda_j\right|^q|\langle B\varphi_j,\varphi_k\rangle|\geq b,\quad\forall\,k\neq j.
\end{array}
\end{equation}
to have \eqref{ipB}.

Furthermore, we note that the global results Theorem \ref{teoglobal} and Theorem \ref{teoglobal0} can be applied to any example below. Note also, that the given examples below, are non-exhaustive.
\subsection{Diffusion equation with Dirichlet boundary conditions}\label{ex1}
Let $I=(0,1)$ and $X=L^2(0,1)$. Consider the following problem
\begin{equation}\label{eqex1}\left\{\begin{array}{ll}
u_t(t,x)-u_{xx}(t,x)+p(t)\mu(x)u(t,x)=0 & x\in I,t>0 \\\\
u(t,0)=0,\,\,u(t,1)=0, & t>0\\\\
u(0,x)=u_0(x) & x\in I.
\end{array}\right.
\end{equation}
We denote by $A$ the operator defined by
\begin{equation*}
D(A)=H^2\cap H^1_0(I),\quad A\varphi=-\frac{d^2\varphi}{dx^2}.
\end{equation*}
and it can be checked that $A$ satisfies \eqref{ipA}. We indicate by $\{\lambda_k\}_{k\in\NN^*}$ and $\{\varphi_k\}_{k\in\NN^*}$ the families of eigenvalues and eigenfunctions of $A$, respectively:
\begin{equation*}
\lambda_k=(k\pi)^2,\quad \varphi_k(x)=\sqrt{2}\sin(k\pi x),\quad \forall\, k\in\NN^*.
\end{equation*}

It is easy to see that \eqref{gap-no-lambda1} holds true (and so \eqref{gap}):
\begin{equation*}
\sqrt{\lambda_{k+1}}-\sqrt{\lambda_k}=\pi,\qquad \forall\, k\in \NN^*.
\end{equation*}

Let $B:X\to X$ be the operator
\begin{equation*}
B\varphi=\mu\varphi
\end{equation*}
with $\mu\in H^3(I)$ such that
\begin{equation}\label{mu}
\mu'(1)\pm\mu'(0)\neq 0\quad\mbox{ and }\quad\langle\mu\varphi_j,\varphi_k\rangle\neq0\quad\forall\, k \in \NN^*.
\end{equation}
Then, there exists $b>0$ such that
\begin{equation*}
\left|\lambda_k-\lambda_j\right|^{3/2}|\langle \mu\varphi_j,\varphi_k\rangle|\geq b,\qquad\forall\, k\in\NN^*.
\end{equation*}
For instance, a suitable function that satisfies \eqref{mu} is $\mu(x)=x^2$: indeed, in this case 
\begin{equation*}
\langle \mu\varphi_j,\varphi_k\rangle=\left\{\begin{array}{ll}
\frac{4kj(-1)^{k+j}}{(k^2-j^2)^2},&  k\neq j,\\\\
\frac{2j^2\pi^2-3}{6j^2\pi^2},&k=j.
\end{array}\right.
\end{equation*}
Therefore, problem \eqref{eqex1} is controllable to the $j$-th eigensolution $\psi_j$ in any time $T>0$ as long as $u_0\in B_{R_T}(\varphi_j)$, with $R_T>0$ a suitable constant, where $\psi_j(t,x)=\sqrt{2}\sin(j\pi x)e^{-j^2\pi^2t}$.
\subsection{Diffusion equation with Neumann boundary conditions}\label{ex2}
Let $I=(0,1)$, $X=L^2(I)$ and consider the Cauchy problem
\begin{equation}\label{eqex2}
\left\{\begin{array}{ll}
u_t(t,x)-u_{xx}(t,x)+p(t)\mu(x)u(t,x)=0 & x\in I,t>0 \\\\
u_x(t,0)=0,\,\,u_x(t,1)=0, &t>0\\\\
u(0,x)=u_0(x). & x\in I.
\end{array}\right.
\end{equation}
The operator $A$, defined by
\begin{equation*}
D(A)=\{ \varphi\in H^2(0,1): \varphi'(0)=0,\,\,\varphi'(1)=0\},\quad A\varphi=-\frac{d^2\varphi}{dx^2}
\end{equation*}
satisfies \eqref{ipA} and has the following eigenvalues and eigenfunctions 
\begin{equation*}
\begin{array}{lll}
\lambda_0=0,&\varphi_0=1\\
\lambda_k=(k\pi)^2,& \varphi_k(x)=\sqrt{2}\cos(k\pi x),& \forall\, k\geq1.
\end{array}
\end{equation*}
Thus, the gap condition \eqref{gap-no-lambda1} is fulfilled with $\alpha=\pi$. Fixed $j\in\NN$, the $j$-th eigensolution is the function $\psi_j(x)=e^{-\lambda_j t}\varphi_j(x)$.

We define $B:X\to X$ as the multiplication operator by a function $\mu\in H^2(I)$, $B\varphi=\mu\varphi$, such that
\begin{equation}
\mu'(1)\pm\mu'(0)\neq 0\quad\mbox{ and }\quad\langle\mu\varphi_j,\varphi_k\rangle\neq0\quad\forall\, k \in \NN.
\end{equation}
It can be proved that, there exists $b>0$ such that
\begin{equation}\label{mu2}
\left|\lambda_k-\lambda_j\right||\langle \mu\varphi_j,\varphi_k\rangle|\geq b,\qquad\forall\, k\in\NN^*.
\end{equation}
For example, $\mu(x)=x^2$ satisfies \eqref{mu2}. Indeed, it can be shown that
\begin{equation*}
\langle \mu\varphi_0,\varphi_k\rangle=\left\{\begin{array}{ll}
\frac{2\sqrt{2}(-1)^{k}}{(k\pi)^2},&k\geq1,\\\\
\frac{1}{3},&k=0,
\end{array}\right.
\end{equation*}
and for $j\neq0$
\begin{equation*}
\langle \mu\varphi_j,\varphi_k\rangle=\left\{\begin{array}{ll}
\frac{4(-1)^{k+j}(k^2+j^2)}{(k^2-j^2)^2\pi^2},&k\neq j,\\\\
\frac{1}{3}+\frac{1}{2j^2\pi^2},&k=j.
\end{array}\right.
\end{equation*}

Therefore, problem \eqref{eqex2} is controllable to the  $j$-th eigensolution $\psi_j$ in any time $T>0$ as long as $u_0\in B_{R_T}(\varphi_j)$, with $R_T>0$ a suitable constant.
\subsection{Variable coefficient parabolic equation}\label{ex3}
Let $I=(0,1)$, $X=L^2(I)$ and consider the problem
\begin{equation}\label{eqex3}
\left\{
\begin{array}{ll}
u_t(t,x)-((1+x)^2u_x(t,x))_x+p(t)\mu(x)u(t,x)=0&x\in I,t>0\\\\
u(t,0)=0,\quad u(t,1)=0,&t>0\\\\
u(0,x)=u_0(x)&x\in I.
\end{array}
\right.
\end{equation}
We denote by $A:D(A)\subset X\to X$ the following operator
\begin{equation*}
D(A)=H^2\cap H^1_0(I),\qquad A\varphi=-((1+x)^2\varphi_x)_x.
\end{equation*}
It can be checked that $A$ satisfies \eqref{ipA} and that the eigenvalues and eigenfunctions have the following expression
\begin{equation*}
\lambda_k=\frac{1}{4}+\left(\frac{k\pi}{\ln2}\right)^2,\qquad\varphi_k=\sqrt{\frac{2}{\ln 2}}(1+x)^{-1/2}\sin\left(\frac{k\pi}{\ln2 }\ln(1+x)\right).
\end{equation*}
Furthermore, $\{\lambda_k\}_{k\in\NN^*}$ verifies the gap condition \eqref{gap-no-lambda1} with $\alpha=\pi/\ln{2}$.

We fix $j\in\NN^*$ and define the operator $B:X\to X$ by $B\varphi=\mu\varphi$, where $\mu\in H^2(I)$ is such that
\begin{equation}\label{mu3}
2\mu'(1)\pm\mu'(0)\neq0,\quad\mbox{and}\quad \langle \mu\varphi_j,\varphi_k\rangle\neq0\quad\forall k \in\NN^*.
\end{equation}
Hence, thanks to \eqref{mu3}, it is possible to show that \eqref{ipB-accr} is fulfilled with $q=3/2$ (see \cite[Section 6.3]{acu}). For instance, when $j=1$, an example of a suitable function $\mu$ that satisfies \eqref{mu3} is $\mu(x)=x$, see \cite{acu} for the verification.

Thus, from Theorem \ref{teo1}, we deduce that, for any $T>0$, system \eqref{eqex3} is controllable to the $j$-th eigensolution if the initial condition $u_0$ is close enough to $\varphi_j$. 
\subsection{Diffusion equation in a $3D$ ball with radial data}\label{ex4}
In this example, we study the controllability of an evolution equation in the three dimensional unit ball $B^3$ for radial data. The bilinear control problem is the following
\begin{equation}\label{eqex4}
\left\{\begin{array}{ll}
u_t(t,r)-\Delta u(t,r)+p(t)\mu(r)u(t,r)=0 & r\in[0,1], t>0 \\\\
u(t,1)=0,&t>0\\\\
u(0,r)=u_0(r) & r\in[0,1]
\end{array}\right.
\end{equation}
where the Laplacian in polar coordinates for radial data is given by the following expression
$$\Delta\varphi(r)=\partial^2_r \varphi(r)+\frac{2}{r}\partial_r\varphi(r).$$
The function $\mu$ is a radial function as well in the space $H^3_r(B^3)$, where the spaces $H^k_r(B^3)$ are defined as follows
$$X:=L^2_{r}(B^3)=\left\{\varphi\in L^2(B^3)\,|\, \exists \psi:\RR\to\RR, \varphi(x)=\psi(|x|)\right\}$$
$$H^k_r(B^3):=H^k(B^3)\cap L^2_{r}(B^3) .$$

The domain of the Dirichlet Laplacian $A:=-\Delta$ in $X$ is $D(A)=H^2_{r}\cap H^1_0(B^3)$. We observe that $A$ satisfies hypothesis \eqref{ipA}. We denote by $\{\lambda_k\}_{k\in\NN^*}$ and $\{\varphi_k\}_{k\in\NN^*}$ the families of eigenvalues and eigenvectors of $A$, $A\varphi_k=\lambda_k\varphi_k$, namely
\begin{equation}\label{ee}\varphi_k=\frac{\sin(k\pi r)}{\sqrt{2\pi}r},\qquad\lambda_k=(k\pi)^2
\end{equation}
$\forall\, k\in\NN^*$, see \cite[Section 8.14]{leb}. Since the eigenvalues of $A$ are actually the same of the Dirichlet $1D$ Laplacian, \eqref{gap-no-lambda1} is satisfied, as we have seen in Example \ref{ex1}.

Fixed $j\in\NN^*$, let $B:X\to X$ be the multiplication operator $Bu(t,r)=\mu(r)u(t,r)$, with $\mu$ be such that
\begin{equation}\label{mu4}
\mu'(1)\pm\mu'(0)\neq 0,\quad\mbox{and}\quad \langle \mu\varphi_j,\varphi_k\rangle\neq0\quad\forall\, k\in\NN^*.
\end{equation}

Then, it can be proved that 
\begin{equation}\label{mu4p}
\left|\lambda_k-\lambda_j\right|^{3/2}|\langle \mu\varphi_j,\varphi_k\rangle|\geq b,\qquad \forall\, k\in\NN^*,
\end{equation}
with $b$ a positive constant. For instance, $\mu(x)=x^2$ verifies \eqref{mu4} and \eqref{mu4p}:
\begin{equation*}
\langle B\varphi_j,\varphi_k\rangle=\left\{\begin{array}{ll}
\frac{4(-1)^{k+j}kj}{(k^2-j^2)^2\pi^2},&k\neq j,\\\\
\frac{2j^2\pi^2-3}{6j^2\pi^2},&k=j.
\end{array}\right.
\end{equation*}

Therefore, by applying Theorem \ref{teo1}, we conclude that for any $T>0$, the exists a suitable constant $R_T>0$ such that, if $u_0\in B_{R_T}(\varphi_j)$, problem \eqref{eqex4} is exactly controllable to the $j$-th eigensolution $\psi_j$ in time $T$.

\subsection{Degenerate parabolic equation}\label{ex5}
In this last section we want to address an example of a control problem for a degenerate evolution equation of the form
\begin{equation}\label{eqex5}
\left\{
\begin{array}{ll}
u_t-\left(x^{\gamma} u_x\right)_x+p(t)x^{2-\gamma}u=0,& (t,x)\in (0,+\infty)\times(0,1)\\\\
u(t,1)=0,\quad\left\{\begin{array}{ll} u(t,0)=0,& \mbox{ if }\gamma\in[0,1),\\\\ \left(x^{\gamma}u_x\right)(t,0)=0,& \mbox{ if }\gamma\in[1,3/2),\end{array}\right.\\\\
u(0,x)=u_0(x).
\end{array}
\right.
\end{equation}
where $\gamma\in[0,3/2)$ describes the degeneracy magnitude, for which Theorem \ref{teo1} applies. 

If $\gamma\in[0,1)$ problem \eqref{eqex5} is called weakly degenerate and the natural spaces for the well-posedness are the following weighted Sobolev spaces. Let $I=(0,1)$ and $X=L^2(I)$, we define
\begin{equation*}
\begin{array}{l}
H^1_{\gamma}(I)=\left\{u\in X: u \mbox{ is absolutely continuous on } [0,1], x^{\gamma/2}u_x\in X\right\}\\\\
H^1_{\gamma,0}(I)=\left\{u\in H^1_\gamma(I):\,\, u(0)=0,\,\,u(1)=0\right\}\\\\
H^2_\gamma(I)=\left\{u\in H^1_\gamma(I): x^{\gamma}u_x\in H^1(I)\right\}.
\end{array}
\end{equation*}
We denote by $A:D(A)\subset X\to X$ the linear degenerate second order operator
\begin{equation}
\left\{\begin{array}{l}
\forall u\in D(A),\quad Au:=-(x^{\gamma}u_x)_x,\\\\
D(A):=\{u\in H^1_{\gamma,0}(I),\,\, x^{\gamma}u_x\in H^1(I)\}.
\end{array}\right.
\end{equation}
It is possible to prove that $A$ satisfies \eqref{ipA} (see, for instance \cite{cmp}) and furthermore, if we denote by $\{\lambda_k\}_{k\in\NN^*}$  the eigenvalues and by $\{\varphi_k\}_{k\in\NN^*}$ the corresponding eigenfunctions, it turns out that the gap condition \eqref{gap-no-lambda1} is fulfilled with $\alpha=\frac{7}{16}\pi$ (see \cite{kl}, page 135). 

If $\gamma\in[1,2)$, problem \eqref{eqex5} is called strong degenerate and the corresponding weighted Sobolev space are described as follows: given $I=(0,1)$ and $X=L^2(I)$, we define
\begin{equation*}
\begin{array}{l}
H^1_{\gamma}(I)=\left\{u\in X: u \mbox{ is absolutely continuous on } (0,1],\,\, x^{\gamma/2}u_x\in X\right\}\vspace{.1cm}\\\\
H^1_{\gamma,0}(I):=\left\{u\in H^1_{\gamma}(I):\,\,u(1)=0\right\},\vspace{.1cm}\\\\
H^2_\gamma(I)=\left\{u\in H^1_\gamma(I):\,\, x^{\gamma}u_x\in H^1(I)\right\}.
\end{array}
\end{equation*}
In this case the operator $A:D(A)\subset X\to X$ is defined by
\begin{equation*}
\left\{\begin{array}{l}
\forall u\in D(A),\quad Au:=-(x^{\gamma}u_x)_x,\vspace{.1cm}\\\\
D(A):=\left\{u\in H^1_{\gamma,0}(I):\,\, x^{\gamma}u_x\in H^1(I)\right\}\vspace{.1cm}\\
\qquad\,\,\,\,\,=\left\{u\in X:\,\,u \mbox{ is absolutely continuous in (0,1] },\,\, x^{\gamma}u\in H^1_0(I),\right.\vspace{.1cm}\\
\qquad\qquad\,\,\,\left.x^{\gamma}u_x\in H^1(I)\mbox{ and } (x^{\gamma}u_x)(0)=0\right\}
\end{array}\right.
\end{equation*}
and it has been proved that \eqref{ipA} holds true (see, for instance \cite{cmvn}) and that \eqref{gap-no-lambda1} is satisfied for $\alpha=\frac{\pi}{2}$ (see \cite{kl}).

We fix $j=1$ and for all $\gamma\in[0,3/2)$, we define the linear operator $B:X\to X$ by $Bu(t,x)=x^{2-\gamma}u(t,x)$ and in \cite{cu} we have proved that there exists a constant $b>0$ such that
\begin{equation*}
\left|\lambda_k-\lambda_1\right|^{3/2}|\langle B\varphi_1,\varphi_k\rangle|\geq b\quad\forall k\in\NN^*.
\end{equation*}

Finally, by applying Theorem \ref{teo1}, we ensure the exact controllability of problem \eqref{eqex5} to the first eigensolution, for both weakly and strongly degenerate problems.

\section*{Acknowledgments}
We are grateful to J. M. Coron and P. Martinez for their precious comments and suggestions.

\appendix
\section{Proofs of Propositions \ref{prop38} and \ref{prop39}}\label{appendix}
In this Appendix we present the proofs of Propositions \ref{prop38} and \ref{prop39} which follow those of \cite[Proposition 4.3]{acu} and \cite[Proposition 4.4]{acu}, respectively. However, since the hypotheses of Propositions \ref{prop38} and \ref{prop39} are slightly different with respect to \cite[Proposition 4.3]{acu} and \cite[Proposition 4.4]{acu}, we preferred to show their proofs in the current settings.
\begin{proof}[Proof of Proposition \ref{prop38}.]
Thanks to Remark \ref{rmk-regularity}, taking the scalar product of \eqref{v} with $v$, we obtain
\begin{equation}
\langle v'(t),v(t)\rangle+\langle A v(t),v(t)\rangle+p(t)\langle Bv(t)+B\varphi_j,v(t)\rangle=0,\quad\text{for a.e. }t\in[\eps,T].
\end{equation}
Thus, we get that for a.e. $t\in[\eps,T]$
\begin{equation}\label{energy}
\begin{split}
\frac{1}{2}\frac{d}{dt}\norm{v(t)}^2+\langle A v(t),v(t)\rangle&\leq  \norm{B}\left(|p(t)|\norm{v(t)}^2+|p(t)|\norm{\varphi_j}\norm{v(t)}\right)\\
&\leq \norm{B}\left(|p(t)|\norm{v(t)}^2+\frac{1}{2}|p(t)|^2+\frac{1}{2}\norm{v(t)}^2\right).\\
\end{split}
\end{equation}
Therefore, since $A$ satisfies \eqref{ipAA}, we have that
\begin{equation*}
\frac{1}{2}\frac{d}{dt}\norm{v(t)}^2\leq \left(\sigma+\norm{B}|p(t)|+\frac{\norm{B}}{2}\right)\norm{v(t)}^2+\frac{1}{2}\norm{B}|p(t)|^2,\quad \text{for a.e. }t\in[\eps,T].
\end{equation*}
We now integrate the last inequality from $\eps$ to $t$ to obtain
\begin{multline*}
\int_\eps^t \frac{d}{ds}\norm{v(s)}^2ds\leq\\ \int_\eps^t\left(2\sigma+\norm{B}(2|p(s)|+1)\right)\norm{v(s)}^2ds+\norm{B}\int_0^{T}|p(s)|^2ds,\quad \text{for a.e. }t\in[\eps,T],
\end{multline*}
and, by Gronwall's inequality, we conclude that
\begin{equation*}
\norm{v(t)}^2\leq\left(\norm{v(\eps)}^2+\norm{B}\int_0^{T}|p(s)|^2ds\right)e^{\int_\eps^t(2\sigma+\norm{B}(2|p(s)|+1))}ds,\quad \text{for a.e. }t\in[\eps,T].
\end{equation*}
Taking the limit as $\eps\to0$, we find
\begin{equation*}
\norm{v(t)}^2\leq\left(\norm{v_0}^2+\norm{B}\int_0^{T}|p(s)|^2ds\right)e^{\int_0^t(2\sigma+\norm{B}(2|p(s)|+1))}ds,\quad \text{for a.e. }t\in[0,T].
\end{equation*}
Thus, taking the supremum over the interval $[0,T]$, the last inequality becomes
\begin{equation*}
\begin{split}
\sup_{t\in[0,T]}\norm{v(t)}^2\leq e^{2\norm{B}\sqrt{T}\norm{p}_{L^2(0,T)}+(2\sigma+\norm{B})T}\left(\norm{v_0}^2+\norm{B}\norm{p}^2_{L^2(0,T)}\right).
\end{split}
\end{equation*}
Finally, recalling estimate \eqref{prelim-p-bound} for $p$, we get \eqref{unifv}.
\end{proof}

\begin{proof}[Proof of Proposition \ref{prop39}.]
Since $w\in H^1(0,T;X)\cap L^2(0,T;D(A))$, taking the scalar product of both members of the equation in \eqref{w} with $w(t)$, we obtain
\begin{equation}\begin{split}
\frac{1}{2}\frac{d}{dt}\norm{w(t)}^2&\leq \sigma\norm{w(t)}^2+|p(t)|\norm{Bv(t)}\norm{w(t)}\\
&\leq \left(\frac{1}{2}+\sigma\right)\norm{w(t)}^2+\norm{B}^2\frac{1}{2}|p(t)|^2\norm{v(t)}^2,\quad \text{for a.e }t\in[0,T].
\end{split}
\end{equation}

Then, by Gronwall's inequality, and appealing to \eqref{unifv} and \eqref{prelim-p-bound}, we get
\begin{equation}\begin{split}
\sup_{t\in[0,T]}\norm{w(t)}^2&\leq \norm{B}^2e^{(2\sigma+1)T}\norm{p}^2_{L^2(0,T)}\sup_{t\in[0,T]}\norm{v(t)}^2\\
&\leq \norm{B}^2e^{(4\sigma+\norm{B}+1)T+2\norm{B}N(T)\sqrt{T}\norm{v_0}}(1+\norm{B}N(T)^2)\norm{v_0}^2\norm{p}^2_{L^2(0,T)}\\
&\leq \norm{B}^2N(T)^2e^{(4\sigma+\norm{B}+1)T+2\norm{B}N(T)\sqrt{T}\norm{v_0}}(1+\norm{B}N(T)^2)\norm{v_0}^4.
\end{split}
\end{equation}
Using \eqref{v0}, we obtain that
\begin{equation*}
\sup_{t\in[0,T]}\norm{w(t)}^2\leq K(T)^2\norm{v_0}^4,
\end{equation*}
which yields to \eqref{wT} and \eqref{KT}.
\end{proof}

\section*{References}
\bibliographystyle{plain}
\bibliography{biblio}
\end{document}